\algnewcommand\algorithmicparfor{\textbf{parfor}}
\algnewcommand\algorithmicpardo{\textbf{do}}
\algnewcommand\algorithmicendparfor{\textbf{end\ parfor}}
\newcommand{\N}{\mathbb{N}}
\newcommand{\R}{\mathbb{R}}
\DeclareMathOperator*{\argmin}{arg\,min}
\title{Counterfactual Explanations for Linear Optimization} 
\keywords{counterfactual explanations; linear optimization; inverse optimization}
\begin{document}
%%%%%%%%%%%%%%%%%%%%%%%%%%%%%%%%%%%%%%%%%%%%%%%%%%%%%%%%%%%%%%%
%                                                             %
%  Prepared by S. Ilker Birbil.       Date: February, 2023    %
%                                                             %
%%%%%%%%%%%%%%%%%%%%%%%%%%%%%%%%%%%%%%%%%%%%%%%%%%%%%%%%%%%%%%%
\setuptodonotes{backgroundcolor=red!10, linecolor=red,
  bordercolor=red, size=\tiny, tickmarkheight=0.1cm}

\newcommand\hlb[3][]{ \todo[inline,caption={emptytext},
  size=\normalsize, backgroundcolor=yellow!70, bordercolor=yellow!70, noshadow, #1]{
    \begin{minipage}{
        \textwidth-4pt}#2
    \end{minipage}}
  \todo{\begin{spacing}{0.5}#3\end{spacing}}}

\newcommand{\hlc}[2]{\hl{#1}\todo{\begin{spacing}{0.5}#2\end{spacing}}}

\newcommand{\sitodo}[2][]{\todo[caption={#2}, #1]{
    \begin{spacing}{0.5}#2\end{spacing}}}

\newcommand{\intodo}[2][]{\todo[inline, noshadow, caption={emptytext}, #1]{
    \begin{spacing}{1.0}\normalsize{#2}\end{spacing}}}

\newcommand{\sib}[1]{\textcolor{red}{#1}}
\newcommand{\tsib}[1]{\begin{tcolorbox}\textcolor{red}{#1}\end{tcolorbox}}

\newcounter{sibcmntcounter}
\setcounter{sibcmntcounter}{1}
\long\def\symbolfootnote[#1]#2{\begingroup
  \def\thefootnote{\fnsymbol{footnote}}\footnote[#1]{#2}\endgroup}
\newcommand{\sibcmnt}[1]{{\textbf{
      \textcolor{red}{(C.\arabic{sibcmntcounter})}}
    \let\thefootnote\relax\footnotetext{\textcolor{red}
        {\small(C.\arabic{sibcmntcounter})~#1}}}
  \addtocounter{sibcmntcounter}{1}}

\newcommand{\red}[1]{\textcolor{red}{#1}}
\newcommand{\blue}[1]{\textcolor{blue}{#1}}
\newcommand{\magenta}[1]{\textcolor{magenta}{#1}}
\newcommand{\bm}[1]{\mbox{\boldmath{$#1$}}}
\newcommand{\rb}[1]{\raisebox{-1.5ex}[0cm][0cm]{#1}}
\newcommand{\HRule}{\noindent\rule{\linewidth}{0.5mm}}
\newcommand{\dsum}{\displaystyle\sum}
\newcommand{\veps}{\varepsilon}

\newcommand{\CA}{\mathcal{A}}
\newcommand{\CB}{\mathcal{B}}
\newcommand{\CC}{\mathcal{C}}
\newcommand{\CD}{\mathcal{D}}
\newcommand{\CG}{\mathcal{G}}
\newcommand{\CH}{\mathcal{H}}
\newcommand{\CI}{\mathcal{I}}
\newcommand{\CJ}{\mathcal{J}}
\newcommand{\CK}{\mathcal{K}}
\newcommand{\CL}{\mathcal{L}}
\newcommand{\CN}{\mathcal{N}}
\newcommand{\CP}{\mathcal{P}}
\newcommand{\CS}{\mathcal{S}}
\newcommand{\CT}{\mathcal{T}}
\newcommand{\CX}{\mathcal{X}}
\newcommand{\ZZ}{\mathbb{Z}}
\newcommand{\RR}{\mathbb{R}}
\newcommand{\NN}{\mathbb{N}}
\newcommand{\II}{\mathbb{1}}

\newcommand{\va}{\bm{a}}
\newcommand{\vb}{\bm{b}}
\newcommand{\vc}{\bm{c}}
\newcommand{\vd}{\bm{d}}
\newcommand{\ve}{\bm{e}}
\newcommand{\vf}{\bm{f}}
\newcommand{\vg}{\bm{g}}
\newcommand{\vh}{\bm{h}}
\newcommand{\vp}{\bm{p}}
\newcommand{\vr}{\bm{r}}
\newcommand{\vt}{\bm{t}}
\newcommand{\vu}{\bm{u}}
\newcommand{\vv}{\bm{v}}
\newcommand{\vw}{\bm{w}}
\newcommand{\vx}{\bm{x}}
\newcommand{\vy}{\bm{y}}
\newcommand{\vz}{\bm{z}}
\newcommand{\zv}{\bm{0}}
\newcommand{\ov}{\bm{1}}

\newcommand{\vveps}{\bm{\veps}}
\newcommand{\veta}{\bm{\eta}}
\newcommand{\vxi}{\bm{\xi}}
\newcommand{\valpha}{\bm{\alpha}}
\newcommand{\vbeta}{\bm{\beta}}
\newcommand{\vgamma}{\bm{\gamma}}
\newcommand{\vtheta}{\bm{\theta}}
\newcommand{\vlambda}{\bm{\lambda}}
\newcommand{\vnu}{\bm{\nu}}
\newcommand{\vpi}{\bm{\pi}}
\newcommand{\vtau}{\bm{\tau}}

\newcommand{\mA}{\bm{A}}
\newcommand{\mB}{\bm{B}}
\newcommand{\mC}{\bm{C}}
\newcommand{\mD}{\bm{D}}
\newcommand{\mE}{\bm{E}}
\newcommand{\mF}{\bm{F}}
\newcommand{\mG}{\bm{G}}
\newcommand{\mH}{\bm{H}}
\newcommand{\mI}{\bm{I}}
\newcommand{\mL}{\bm{L}}
\newcommand{\mM}{\bm{M}}
\newcommand{\mP}{\bm{P}}
\newcommand{\mQ}{\bm{Q}}
\newcommand{\mR}{\bm{R}}
\newcommand{\mS}{\bm{S}}
\newcommand{\mU}{\bm{U}}
\newcommand{\mV}{\bm{V}}
\newcommand{\mX}{\bm{X}}

\newcommand{\tr}{^{\intercal}}
\newcommand{\ntr}{^{-\intercal}}
\newcommand{\inv}{^{-1}}

\newcommand{\gr}{\mbox{graph}}
\newcommand{\ra}{\rightarrow}
\newcommand{\la}{\leftarrow}
\newcommand{\Ra}{\Rightarrow}
\newcommand{\rra}{\rightrightarrows}
\newcommand{\ptr}{\marginpar{$\Leftarrow$}}

\newcommand{\pfxi}{\frac{\partial f(\vx)}{\partial x_i}}
\newcommand{\pfx}{\partial f(\vx)}
\newcommand{\pf}{\partial f}
\newcommand{\pxi}{\partial x_i}
\newcommand{\px}{\partial x}

\newcommand{\nfx}{\nabla f(\vx)}
\newcommand{\eps}{\epsilon}
\newcommand{\eg}{\textit{e.g.}}
\newcommand{\ie}{\textit{i.e.}}

\newcommand{\vsp}{\vspace{4mm}}
\newcommand{\vspp}{\vspace{8mm}}
\newcommand{\vsppp}{\vspace{12mm}}

\newcommand{\hsp}{\hspace{4mm}}
\newcommand{\hspp}{\hspace{8mm}}
\newcommand{\hsppp}{\hspace{12mm}}

\newcommand{\pr}[1]{\mathbb{P}\left(#1\right)}
\newcommand{\ex}[1]{\mathbb{E}\left[#1\right]}
\newcommand{\variance}[1]{\mbox{Var}\left(#1\right)}
\newcommand{\covar}[1]{\mbox{Cov}\left(#1\right)}
\newcommand{\C}[2]{\left(\begin{array}{c} #1 \\ #2 \end{array}\right)}

\newcommand{\maximize}{\mbox{maximize\hspace{4mm} }}
\newcommand{\minimize}{\mbox{minimize\hspace{4mm} }}
\newcommand{\subto}{\mbox{subject to\hspace{4mm}}}

\newenvironment{sibitemize}{
  \renewcommand{\labelitemi}{$\diamond$}
  \begin{itemize}
    \setlength{\parskip}{0mm}}
  {\end{itemize}}

\newcommand{\propnum}[2]{\vspace{3mm}
  \noindent {\sc Proposition #1}{\it #2} \vspace{3mm}}
\newcommand{\lemnum}[2]{\vspace{3mm}
  \noindent {\sc Lemma #1}{\it #2} \vspace{3mm}}
\newcommand{\thmnum}[2]{\vspace{3mm}
  \noindent {\sc Theorem #1}{\it #2} \vspace{3mm}} 

\maketitle

\begin{abstract}
%\textcolor{red}{D: Dick will make first draft of abstract.} \\
The concept of counterfactual explanations (CE) has emerged as one of the important concepts to understand the inner workings of complex AI systems. In this paper, we translate the idea of CEs to linear optimization and propose, motivate, and analyze three different types of CEs: strong, weak, and relative. While deriving strong and weak CEs appears to be computationally intractable, we show that calculating relative CEs can be done efficiently. By detecting and exploiting the hidden convex structure of the optimization problem that arises in the latter case, we show that obtaining relative CEs can be done in the same magnitude of time as solving the original linear optimization problem. This is confirmed by an extensive numerical experiment study on the NETLIB library. 

\end{abstract}

\section{Introduction.}

As artificial intelligence (AI) continues to influence our daily lives, the need for interpretability and transparency increases. This need for comprehensive explanations has been accelerated partly by the legislative initiatives such as the General Data Protection Regulation, the European Union AI Act, and the US Blueprint for an AI Bill of Rights \citep{GDPR, EUAIAct, USAIBill}. These regulations emphasize the necessity of providing clear and understandable explanations for automated systems, echoing society's demand for trustworthy AI and aligning with the \emph{right for explanation} principle. 

These developments have attracted the attention of the researchers in machine learning who have started to develop algorithms that pave the way for explainable AI (XAI) \citep{biran2017}. Among these efforts, the concept of counterfactual explanations (CEs) has emerged as one of the key approaches in XAI to understanding the inner workings of complex AI models \citep{wachter2018, maragno2022}. CEs aim to identify the (smallest) change in personal data that would lead to a desired model outcome. A canonical CE example is credit scoring, where a model predicts loan eligibility. If the model denies a loan for an individual, then it should also offer an explanation. For instance a CE might state ``if your annual salary was 1500 EUR higher and your account balance was 900 EUR higher, you would have been granted a loan.''

While much attention is dedicated to the explanations of AI systems, only a few works tackle explainability of decisions stemming from the solutions of optimization problems \citep{korikov2021counterfactualGDPR,korikov2021counterfactual,korikov2023objective,aigner2024framework,goerigk2023framework}. These solutions play a pivotal role in diverse domains, ranging from logistics and finance to healthcare and engineering. Despite their widespread presence, the lack of transparency in the optimal solutions turns these systems into \emph{black boxes}. As a result, the reasoning behind their support in decision-making remains concealed.

In this context, the significance of explanations in optimization becomes apparent as they offer advantages at various levels of application. First, they can be used to support individuals attempting to understand the reasoning behind optimization-driven decisions. Second, stakeholders, such as businesses and public authorities, are impacted by optimization results and can use explanations to get clear justifications for decisions that may have broad implications. Finally, the operations research analyst, who is responsible for setting up complex models, can greatly benefit from substantial insights into the complex interactions of variables. 

To this end, we propose to apply and extend the concept of CEs to linear optimization. As defined in \cite{korikov2021counterfactualGDPR}, we obtain a CE by identifying the (smallest) change needed in the optimization model's parameters such that an optimal solution of the new problem fulfills a set of desired properties. Consider a multi-party resource allocation problem as the counterpart of the credit scoring example above. This problem aims at minimizing the cost under a set of budget constraints. The optimal solution results in allocating only 100 resources to one of the parties. Then, a CE could be ``if the cost of the party decreases to 30 EUR and its budget increases to 2300 EUR, then the allocated amount would have been higher than 110.'' While the latter definition provides a useful concept, it is only one example from a complete set of situations and difficulties appearing in practical applications. First, this definition only considers the existence of an optimal solution with the desired property. Hence, it ignores the fact that multiple optimal solutions may exist and not all of them are desired. Second, the latter concept applied to constraint parameters leads to intractable optimization problems which cannot be solved for realistic instance sizes.

% This discussion may bring to mind the powerful optimization tools such as inverse optimization, sensitivity analysis and parametric optimization. In fact, reconsidering \textit{what-if scenarios} through counterfactual explanations allows us to gain a wider understanding of the dynamics of change and effect within the linear optimization models. With this work, we argue that the paradigm of counterfactual explanations can provide a new insight into linear optimization complementing its potent theory.

In summary, we make the following contributions to the literature:
\begin{enumerate}
\item We extend the concept of CEs studied in \cite{korikov2021counterfactualGDPR} by defining three different types of CEs which cover more relevant situations occuring in practice.
\item We apply the three concepts to linear optimization problems where both the objective and constraint parameters can be adjusted.
\item We derive formulations to calculate all three types of CEs and analyze the mathematical structure of the proposed formulations, identify their challenges, and propose approaches to solve them.
\item We computationally test all three types of CEs on a diet problem based on real-world data. Furthermore, we conduct an extensive computational study for the relative CEs on linear optimization problems from the NETLIB library. To make sure that our results are reproducible, we provide a dedicated repository to replicate our experiments\footnote{https://github.com/JannisKu/CE4LOPT}.
\end{enumerate}

\subsection{Examples.}\label{sec:examples}
%\textcolor{red}{D: only these examples remain in this section. My proposal is to move this to the Introduction. Reason: now we don't have concrete examples in the Introduction, and I think it is good to have examples already from the start. Dick will polish the text of these examples.} \sibcmnt{I have not moved this section to introduction. I will do it after Dick's revision.}
In this section, we describe in detail several motivating examples where counterfactual explanations are crucial. These examples are taken from real life applications, and many of them were executed by the authors. 

\noindent {\bf Diet problem.}
In \cite{Peters2016,peters2022world} a linear optimization model is developed to optimize the food supply chain for the United Nations' World Food Programme (WFP). This model has been and is used for each project of the WFP, and has enabled WFP to feed millions of people. An important part of the model is the diet problem: Which food commodities should be included in the food basket such that all nutritional requirements are satisfied, while the costs are minimized? The food commodities can be purchased from many different suppliers. Suppose that the optimal solution of the linear optimization model is such that a certain potential supplier of a certain food commodity is not part of the solution. It is certainly not enough to tell this supplier that the model outcome is such that nothing will be purchased. The supplier wants to know for which reduction in the purchasing cost, her commodity is part of an optimal food basket. In case she is able to change the nutritional contents of her food commodity, the question is: ``What is the minimal change in these nutritional contents such that her commodity is part of the food basket?'' Providing such insights to the suppliers is also beneficial for WFP, since suppliers might change the purchasing costs or nutritional contents.

A similar situation occurs in the mobile application Feed Calculator \citep{FeedCalculator}, which is now used by thousands of small farmers in Africa and Asia to optimize the ingredients for the cattle feed. Each possible ingredient can be purchased at a certain local supplier. 
The core of this application is the diet model, which is a small linear optimization problem. The local suppliers could use counterfactual explanation for this linear optimization model to detect minimal changes to the costs or nutritional contents such that its food commodity becomes attractive for being purchased by local farmers.

We use the diet problem in this work as a running example to explain the different concepts.

\noindent {\bf Facility location problem.}
One of the Sustainable Development Goals of the United Nations is good geospatial accessibility to healthcare centers in low- and middle-income countries. Facility location approaches have been developed to optimize geospatial accessibility given a certain budget to build new centers \citep{TLVN2024}. For local governments it is crucial to know, for example, why in the optimal solution there is no center opened in their districts. Counterfactual questions as ``What is the minimal change in the budget, or in the costs for building a center in their district, or in the population density, that leads to an optimal solution where a center is chosen in their district?''

Of course similar counterfactual explanations are needed in other classical facility problems. For example, one of the authors optimized the physical distribution structure for Philips in Europe. Several distribution centers were closed, and new ones were opened. Of course, the management of these centers that were going to be closed has to be explained why a closure is necessary. Counterfactual explanations are the ideal tool for that. 

\noindent {\bf Network flow problem.}
Many supply chain problems can be modeled as (multi-commodity) network problems. For example, in the linear optimization model for WFP's food supply chain \citep{Peters2016,peters2022world}, the diet problem is integrated into a network model to model the multi-modal transportation from the supplier to the beneficiaries. Suppose that a certain port is not used according to the optimal solution of the linear optimization model. The authorities of this port would like to know what is the minimal reduction in costs such that it is used in the optimal solution. Similar counterfactual explanations are needed for potential transportation companies. 

\noindent {\bf Flood safety problem.}
Optimization has been used to determine new safety standards for the dike heights in the Netherlands \citep{flood2,flood1}. For each of the 53 so-called dike-ring areas, i.e., an area that is protected by dikes, finally one out of five different safety levels has been chosen. For the management of a specific dike-ring area it is crucial to know what minimal change in the dike-ring area characteristics (as for example the number of people or the economic value in that area) would have led to a higher safety level.  

Most of the above examples are at the tactical or even strategical level. Indeed, especially for strategical decisions that affect multi-stakeholders, counterfactual explanations are needed. However, we emphasize that such explanations are also often needed in more operational decisions, where it can affect the personal lives. Here are two examples taken from the Franz Edelman Award finalists and winners.

\medskip
\noindent {\bf Scheduling trains in the Netherlands.}
In December 2006, Netherlands Railways introduced a completely new timetable by using sophisticated operations research techniques \citep{NS}. The first versions of the schedule led to much social unrest for the employees. The schedules were considered as ``boring,'' since for each employee it was more or less the same every day. After these aspects were included in the approach, and the resulting schedule was better explained, the employees finally accepted the new schedule. 

\medskip
\noindent {\bf Boston public school transport.}
\cite{Boston} describe that optimization methods were developed for Boston Public Schools (BPS) to create a better way to construct bus routes. The goals were improving efficiency, deepening the ability to model policy changes, and realigning school start times.  Using this methodology, BPS proposed a solution that would have saved an additional \$12 million annually and also shifted students to more developmentally appropriate school start times (\textit{e.g.}, by reducing the number of high school students starting before 8:00 a.m. from 74\% to 6\% and the average number of elementary school students dismissed after 4:00 p.m. from 33\% to 15\%). However, 85\% of the schools’ start times would have been changed, with a median change of one hour. This magnitude of change led to strong vocal opposition from some school communities that would have been affected negatively. Therefore, BPS did not implement the plan.
%The final roster of employees, for example, could have a deep impact on their lives. 

In the examples above, multiple stakeholders are involved, and these stakeholders need counterfactual explanation not only for the sake of explanation, but also for actually changing the input parameters. Counterfactual explanations can thus also be used as a tool for negotiation among multiple stakeholders. 

Counterfactual explanations could also be valuable in single stakeholder environments. Minimal changes in input parameters such that a certain solution is optimal gives much insight in the decision problem. Feasibility ensuing counterfactual explanations are specifically very valuable for answering questions like ``What is the minimal change in the input data such that the problem becomes feasible?'' 

%\vsp

Notice that in all the examples mentioned above, even for the simplest diet problem, the {\em factual} explanations do not work. The linear optimization model and the simplex or interior point methods are too difficult to explain to a non-expert. However, ``what-if scenario analysis'' could provide partial explanation in some cases. For example, enforcing in the facility location model that a certain center is opened, one could optimize for the overall accessibility, and then calculate the accessibility decrease. The explanation is then: ``If we open this center, then there will be a reduction of $x$\% in overall accessibility. Since in general obtaining explanations by ``what-if scenario analysis'' is computationally much easier than by counterfactual explanations, we advocate to use both ways. %\todo{Dick will move this paragraph.}  

We finally point out that the result of counterfactual explanation could also be that the minimal change is extremely small, or even no change has to be performed. The last case could happen when there are multiple optimal solutions and at least one of it already has the desired properties. For those cases, we argue that {\em factual} explanations should be added, based on secondary criteria not included in the linear optimization model.   

%\textcolor{red}{Todo Dick: Add problems regarding individual users: railway roster problem and Boston school bus problem.} 

%\begin{itemize}
%    \item Food basket optimization: organization $i$ controls the nutrition value of its commodity. Counterfactual question: What is the minimum change in nutrition value, such that the amount used of my product increases by 5\%?
%    \item Optimizing Healthcare Accessibility: optimal location of hospitals (or testing centers, or water wells). Counterfactual Question: what is the minimum change in budget, s.t. a location in my radius of 200 meters would be opened? What is the minimum change in number of citizens s.t. a location in my radius of 200 meters would be opened?
%    \item Scheduling problem (e.g. in hospitals): what is the minimum change in parameters, s.t. my shift will not be on Friday night? 
%    \item Network flow: what is the minimum change in parameters (e.g. demand of my node) such that flow will be send through my node. 
%    \item 
%\end{itemize}

\subsection{Related Work.}
%\textcolor{red}{D:  Dick will write this till "Most related work for Counterfactuals in Optimization.}

We first discuss the most related works for explainability in optimization.

\cite{korikov2021counterfactualGDPR} and \cite{korikov2023objective,korikov2021counterfactual} study a definition of counterfactual explanations for integer optimization problems. This definition coincides with our weak CE definition; see Section \ref{sec:definitions}. \cite{korikov2021counterfactualGDPR} introduce the concept and study it for the case where only the objective function parameters of the problem may be adjusted. Furthermore, they restrict their approach to the case that the desired solution property may only be defined on a single variable. Additionally, they assume that no such desired solution is optimal for the present problem. We relax all these assumptions in our work. \cite{korikov2021counterfactual} connect the idea of CEs to inverse optimization and use inverse constraint programming to solve the problem where again only the objective function parameters may be adjusted. Finally, \cite{korikov2023objective} generalize the latter works to the case that the constraints for the desired outcome can be defined on all variables. They develop a constraint generation algorithm which can solve the CEs problem for discrete optimization problems if only the objective parameters may be adjusted.

\cite{forel2023explainable} assume that the parameters of the present optimization problem are derived by predictions based on additional context parameters (\textit{e.g.}, the weather, day or temperature). They consider the smart-predict-then-optimize method of \cite{bertsimas2020predictive}, where random forests and nearest neighbor predictions are used. They adapt the idea of CEs to calculate CEs in the context parameter space where changes are only considered in the objective parameters of the problem.

A different concept for explaining optimization problems, which is not connected to counterfactual explanations, was developed by \cite{aigner2024framework}. The authors present a data-driven explanation term, based on historical solutions for the same problem class, which is added to the objective function of the optimization problem to increase the explainability of the solution. 

Finally, \cite{goerigk2023framework}  propose an inherently interpretable model, which provides interpretability of the derived solutions. The authors calculate a decision tree and a small set of solution such that each future problem instance is mapped by the tree to one of the determined solutions. While this approach can be generalized to constraint parameters, it is mainly studied for the objective function parameters.

We next discuss how some of the known techniques in optimization are related to CEs in linear optimization.

\cite{Dantzig} develops general linear programming, in which (some of) the parameters of the linear optimization problem are variables too.  This method is extensively extended to a much wider class of problems \citep{gorissen2024}. We will use these techniques to calculate CEs for linear optimization. 

There is a strong relationship with inverse optimization, 
%\todo{Please add partially constrained inverse optimization problems.}, 
in which the optimal solution is given, and the aim is to calculate unknown parameters in the objective or constraints. 
For a good survey on inverse optimization, we refer the reader to the recent work of \cite{chan2021inverse}.
Calculating CEs for linear optimization appears to be more difficult, since the precise optimal solution is not given, but only some constraints that the (optimal) solution has to satisfy. 
There are several papers on partial inverse optimization, but in those papers only objective parameters can change and finally a computationally intractable bilinear problem has to be solved; see for instance \citep{Wang2013}.

    %very similar but mostly studied for the case that an optimal decision is given and not part of the optimization variables. Some works study the case where only partial solutions $x$ are known, i.e. some entries are fixed to given values while the rest remains variable. This differs to our setup, since we only enforce constraints on $x$ but do not fix values. 

Several papers (e.g., \cite{amaral2008reformulation}, \cite{barratt2021automatic}, and \cite{moosaei2021optimal}) study  the question ``What is the minimal change in the input data such that the problem becomes feasible?'' This is a simple version of the counterfactual explanation concept. Especially the computationally tractable methods for relative counterfactuals developed in this paper could also be used to answer such feasibility questions.

Parametric optimization and sensitivity analysis are related concepts where changes in single parameters (leading to changes in multiple coefficients in the former case) are studied. However, the goal of these concepts is to analyze the region of changes which can be performed without changing the optimal solution (or optimal basis).  Moreover, sensitivity analysis can be seen as factual explanation: it analyzes the effect on the optimal solution when we change the values of the problem parameters.

\section{Definitions for Counterfactual Explanations.}\label{sec:definitions}
In this work, we propose to translate the concept of CEs in machine learning to linear optimization. Consider an instance of a linear programming (LP) problem of the form
\begin{equation}
\label{eq:LP}
\begin{aligned}
\min \ & c^\top x \\
s.t. \quad & Ax\ge b,\\
& x\ge 0,
\end{aligned}
\end{equation}
where $c\in \R^n$ and $A\in \R^{m\times n}$ and $ b\in\R^m$. This problem can be represented by its corresponding problem parameters $(c,A,b)$. An optimization algorithm for such an LP can be interpreted as a function which maps every instance $(c,A,b)\in\mathcal H$ to an optimal solution $x^*$ of the corresponding LP. For a given \textit{factual instance} $(\hat c, \hat A, \hat b)\in\mathcal H$, a counterfactual explanation is a --preferably similar-- instance $(c,A,b)\in \mathcal H$ for which the optimal solution lies in a \textit{favored solution space}, $\mathcal D(\hat x)$. This space does not contain the optimal solution $\hat x$ of the factual instance. In other words, a CE is an update in the optimization parameters such that the optimal solution of the LP with the updated parameters has a given list of desired properties. The main concept described above is visualized in Figure \ref{fig:CE_for_opt}. 

Note that in general more than one solution can be optimal for an LP and hence the calculated optimal solution may depend on the choice of the optimization algorithm. There can also be solutions, which are only feasible for the LP, but they return smaller objective function values than a fixed target. These observations give rise to extend the current framework for three different types of CEs, namely \textit{weak counterfactual explanations}, \textit{strong counterfactual explanations}, and \textit{relative counterfactual explanations}. 

\begin{figure}[h!]
    \centering
    \includegraphics[scale=0.3]{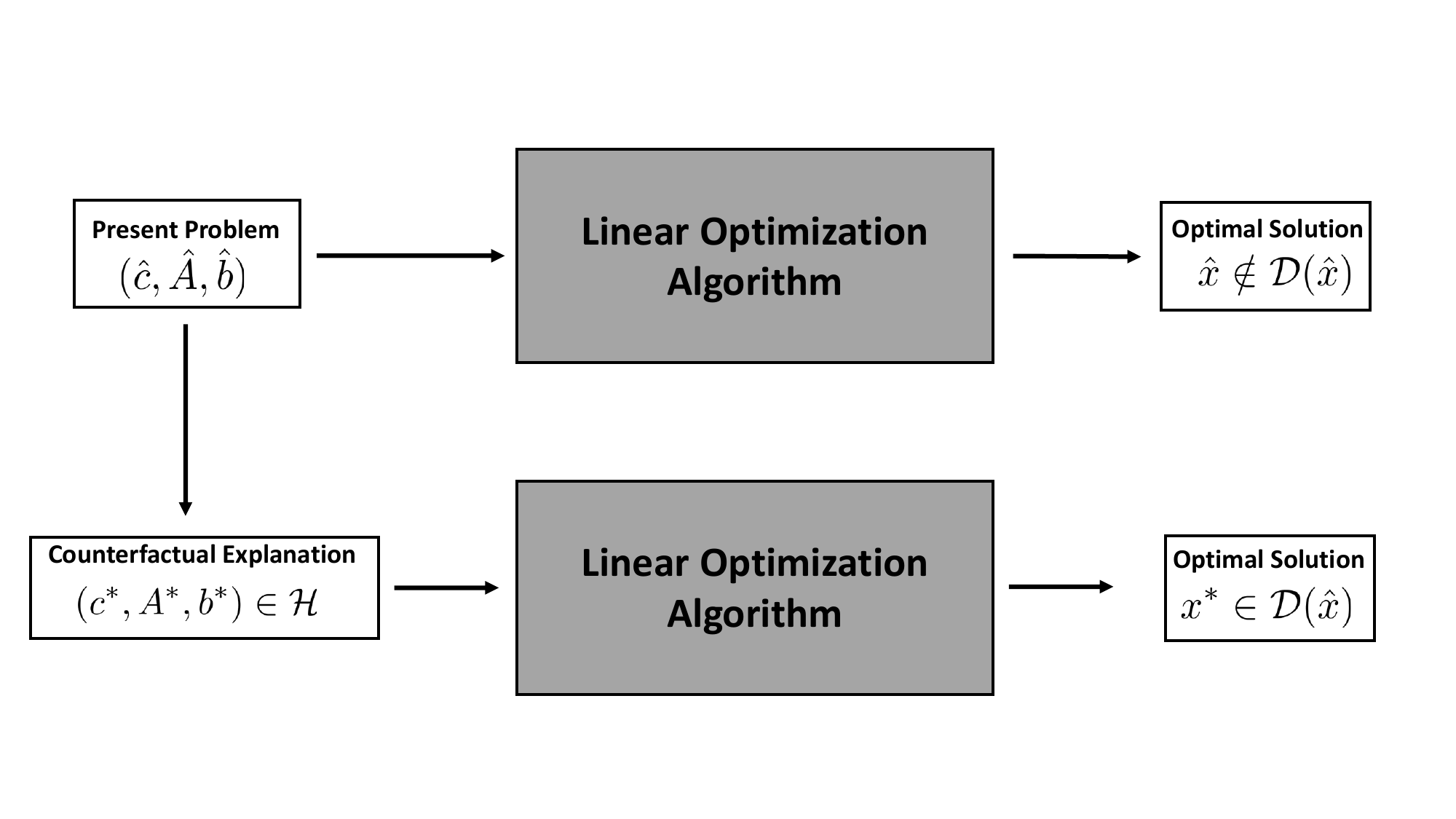}
    \caption{An overview over the concept of counterfactual explanations for linear optimization problems.}
    \label{fig:CE_for_opt}
\end{figure}

Assume the \textit{present problem} is given as
\begin{equation}\tag{PP}
\label{eq:presentProblem}
\begin{aligned}
\min \ & \hat c^\top x \\
s.t. \quad & \hat Ax\ge \hat b,\\
& x\ge 0,
\end{aligned}
\end{equation}
where $\hat c\in \R^n$ and $\hat A\in \R^{m\times n}$ and $\hat b\in\R^m$. The present problem can be interpreted as the optimization problem which was solved by the \textit{decision maker} to obtain an optimal decision $\hat x$. Each variable can be interpreted as being related to what we call a \textit{stakeholder}. In the diet problem example, the decision maker is the organization (including operations research analaysts) that has control over the data as well as the optimization problem, and that is responsible for implementing the final decision. Each variable corresponds to a product sold by a food supplier, which is a stakeholder.

Assume that a given subset of problem parameters is mutable and can be changed within a certain feasible region. To this end, we define the \textit{mutable parameter space} $\mathcal H\subseteq \R^n\times \R^{m\times n} \times \R^m$ which contains the parameters of the present model, \textit{i.e.}, $(\hat c,\hat A,\hat b)\in \mathcal H$. The mutable parameter space can be defined by intervals for each parameter in which the parameter changes or even by a polyhedral or ellipsoidal structure if the allowed changes of parameters depend on each other. For example, if a stakeholder, influenced by the optimization process, has access to change the parameters of the $i$-th column of $A$ (denoted as $A_i$) and the $i$-th cost parameter $c_i$, the mutable parameter space is defined as
\[
\mathcal H=\left\{ (c,A,b): c_j=\hat c_j \text{ and } A_j=\hat A_j, \forall j\neq i, \ b=\hat b, \ c_i\in [\underline c_i, \bar c_i], \ A_i\in [\underline A_i, \bar A_i] \ \right\},
\]
where $\underline c_i, \bar c_i, \underline A_i, \bar A_i$ are the corresponding upper and lower bounds on the mutable parameters. In the diet problem the objective parameter $c_i$ corresponds to the price of a product and the column $A_i$ corresponds to the different nutrient values of the product. For every parameter the corresponding stakeholder can define her mutable space depending on how much she is able to change the parameters.

Finally, we define the \textit{favored solution space} $\mathcal D(\hat x)$ which is the set of solutions $x$ which are favored by a stakeholder which is influenced by the decision $\hat x$. For example, in the diet problem the favored solution space could be the set of solutions, for which the stakeholder sells at least $5\%$ more of her product $i$, i.e.,
\[
\mathcal D(\hat x)=\left\{ x\in\mathcal X: \ x_i\ge 1.05\hat x_i\right\}.
\]
Note that the set $\mathcal D(\hat x)$ does not necessarily has to overlap with the feasible region of the present problem. Furthermore, the favored solution space can be independent of $\hat x$ and we denote it as $\mathcal D$ in this case.

\subsection{Weak Counterfactual Explanations.} The stakeholder which is influenced by the decision $\hat x$ can ask the following counterfactual question: 

\begin{quote}
    ``\textit{What is the minimal change of the mutable parameters I have to make such that a solution from the favored solution space is optimal?}''
\end{quote}

This leads to the following more formal definition.

\begin{definition}[Weak Counterfactual Explanation]
A weak counterfactual explanation is a point $(c,A,b)\in \mathcal H$ such that there exists an optimal solution $x^*$ of Problem \eqref{eq:LP} which lies in the \textit{favored solution space} $\mathcal D(\hat x)$.
\end{definition}
In other words, a weak counterfactual explanation is an update of the mutable parameters of the present problem such that at least one optimal solution exists which belongs to the favored solution space. However, since multiple solutions can be optimal, there is no guarantee that the optimal solution implemented by the decision maker is contained in the favored solution space. Note that usually the stakeholder is interested in the smallest changes of the mutable parameters such that the latter definition holds. More formally, the stakeholder is looking for a weak CE $(c,A,b)\in \mathcal H$ which is closest to the point $(\hat c,\hat A,\hat b)$ in a distance metric $\delta: \mathcal H \times \mathcal H \to \R_+$.

In the following proposition we show a property of weak CEs which was already used in \cite{korikov2021counterfactualGDPR} do define the concept of CEs.
\begin{proposition}\label{prop:weak_CEs}
    If $(c,A,b)$ is a weak CE then
    \begin{equation*}
\begin{aligned}
    \min \ & c^\top x \\
    s.t. \quad & Ax\ge b \\
    & x\ge 0
\end{aligned}
\quad = \quad
\begin{aligned}
    \min \ & c^\top x \\
    s.t. \quad & Ax\ge b \\
    & x\in \mathcal D(\hat x) \\
    & x\ge 0 ,
\end{aligned}
    \end{equation*}
i.e., the constraints in $\mathcal D(\hat x)$ are redundant.
\end{proposition}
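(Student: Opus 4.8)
The plan is to prove equality of the two optimal values by establishing each inequality separately. Write $v_{\mathrm{L}}$ for the optimal value of the left-hand problem (minimizing $c^\top x$ subject to $Ax \ge b$, $x \ge 0$) and $v_{\mathrm{R}}$ for the optimal value of the right-hand problem, which carries the additional restriction $x \in \mathcal D(\hat x)$.

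First I would observe that the feasible region of the right-hand problem is obtained from that of the left-hand problem by intersecting with $\mathcal D(\hat x)$, and is therefore a subset of the left-hand feasible region. Minimizing the same linear objective over a smaller feasible set cannot decrease the optimal value, so $v_{\mathrm{L}} \le v_{\mathrm{R}}$.

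For the reverse inequality I would invoke the defining property of a weak CE. Since $(c,A,b)$ is a weak CE, by definition there exists an optimal solution $x^*$ of the left-hand problem with $x^* \in \mathcal D(\hat x)$. This point satisfies $Ax^* \ge b$, $x^* \ge 0$, and $x^* \in \mathcal D(\hat x)$, so it is feasible for the right-hand problem; since it attains $c^\top x^* = v_{\mathrm{L}}$, we conclude $v_{\mathrm{R}} \le c^\top x^* = v_{\mathrm{L}}$. Combining the two inequalities yields $v_{\mathrm{L}} = v_{\mathrm{R}}$, which is exactly the assertion that the constraints defining $\mathcal D(\hat x)$ are redundant.

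The argument is short and there is no genuine obstacle; the only point requiring care is that the weak CE definition furnishes a \emph{single} point $x^*$ that is simultaneously optimal for the unrestricted problem and a member of $\mathcal D(\hat x)$ — it is precisely this simultaneity that drives the second inequality, and it is where the hypothesis is used. One should also note that optimality of $x^*$, and hence attainment of $v_{\mathrm{L}}$, is guaranteed by the very existence of the weak CE, so both minima are well defined and the equality of values is meaningful.
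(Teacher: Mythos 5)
Your argument is correct: the inclusion of feasible regions gives $v_{\mathrm{L}} \le v_{\mathrm{R}}$, and the weak CE definition supplies an optimal $x^*$ of the unrestricted problem lying in $\mathcal D(\hat x)$, which yields $v_{\mathrm{R}} \le v_{\mathrm{L}}$. The paper states this proposition without proof, and your two-inequality argument is exactly the intended (and essentially only) justification, including the correct observation that the hypothesis guarantees attainment of the left-hand minimum.
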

In fact the equation in the proposition is used in \cite{korikov2021counterfactualGDPR} to describe counterfactual explanations, which shows that indeed the authors study weak counterfactual explanations.

% \begin{example}
% This example shows that the two options we discussed are not the same. The question is, which one do we consider.

%     Consider the following present problem
%     \begin{equation}\label{eq:pp}
%     \begin{aligned}
%         \max \ & x_1 \\
%         s.t. \quad & x_1 + x_2 \le 1
%     \end{aligned}
%     \end{equation}
% i.e. we have $\hat c= (1,0)$, $\hat b=1$ and $\hat A=\begin{pmatrix} 1 & 1 \end{pmatrix}$. Then the optimal solution is $\hat x = (1,0)$. Now let's say the stakeholder related to variable $x_1$ wants to have a solution which fulfills $x_1\le 0.9$ and he or she is able to change only the constraint parameters $A$. 

% \textbf{Option 1: Jannis Model} We are looking for the smallest changes in $A$ such that the optimal solution fulfills $x_1\le 0.9$ WITHOUT adding the latter constraint to Problem \eqref{eq:pp}. Then we have to change $A$ from $(1,1)$ to $(\frac{10}{9},1)$. 

% \textbf{Option 2: Ilkers Model} We are looking for the smallest changes in $A$ such that the optimal solution of Problem \eqref{eq:pp} WHERE WE ADD the constraint $x_1\le 0.9$ fulfills the constraint $x_1\le 0.9$. Then Problem \eqref{eq:pp} becomes
%     \begin{align*}
%         \max \ & x_1 \\
%         s.t. \quad & x_1 + x_2\le 1 \\
%         & x_1\le 0.9 
%     \end{align*}
%     and hence the smallest change in the original parameter is to change nothing since the optimal solution of the latter problem already fulfills the constraint.
% \end{example}

\subsection{Strong Counterfactual Explanations.}
It is important to note that in the definition of weak CEs, we only require that there \textit{exists} an optimal solution of the new optimization problem which fulfills the requirements in $\mathcal D(\hat x)$. However, depending on the optimization algorithm the decision maker uses, this solution may not be implemented, since there might be alternative optimal solutions. To avoid this issue, we define the strong version of counterfactual explanations. Here, the stakeholder which is influenced by the decision $\hat x$ asks the following counterfactual question: 

\begin{quote}
    ``\textit{What is the minimal change of the mutable parameters I have to make, such that all optimal solutions are contained in the favored solution space?}''
\end{quote}

This leads to the following definition.

\begin{definition}[Strong Counterfactual Explanation]
A strong counterfactual explanation is a point $(c,A,b)\in \mathcal H$ such that the set of optimal solutions $\mathcal X^*$ of problem \eqref{eq:LP} lies in the \textit{favored solution space} $\mathcal D(\hat x)$, i.e., $\mathcal X^*\subset \mathcal D(\hat x)$.
\end{definition}

The difference here to the weak version is that we require {\em all} optimal solutions of the new optimization problem to fulfill the requirements in $\mathcal D(\hat x)$. This is an important difference, since it would guarantee that a decision from the favored solution space will be implemented by the decision maker, independently of the solution method used to determine the optimal decision.

Note that usually the stakeholder is interested in the smallest changes of the mutable parameters such that the latter definition holds. More formally, the stakeholder is looking for a strong CE $(c,A,b)\in \mathcal H$ which is closest to the point $(\hat c,\hat A,\hat b)$ in a distance metric $\delta: \mathcal H \times \mathcal H \to \R_+$.

\begin{proposition}
    From the definitions of weak and strong CEs it follows:
    \begin{enumerate}
    \item Every strong CE is also a weak CE.
        \item If the set of optimal solutions for a weak counterfactual explanation $(c,A,b)$ is a singleton, then $(c,A,b)$ is a strong CE as well.
    \end{enumerate}
\end{proposition}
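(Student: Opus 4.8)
The plan is to derive both statements directly from the two definitions, since the only real content is how the quantifier ``there exists an optimal solution'' (weak) relates to ``all optimal solutions lie in $\mathcal D(\hat x)$'' (strong). Throughout I would keep in mind that both notions implicitly presuppose that the updated LP \eqref{eq:LP} actually attains an optimum, so that its optimal set $\mathcal X^*$ is nonempty; this is the one point that will need care.

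For the first claim, I would start from a strong CE $(c,A,b)$, so that $\mathcal X^*\subset\mathcal D(\hat x)$. I would then pick any optimal solution $x^*\in\mathcal X^*$ --- which is possible precisely because $\mathcal X^*\neq\emptyset$ --- and observe that $x^*\in\mathcal D(\hat x)$ by the inclusion. Since $x^*$ is by construction an optimal solution lying in the favored solution space, the defining condition of a weak CE is met, and hence $(c,A,b)$ is a weak CE.

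For the second claim, I would assume $(c,A,b)$ is a weak CE whose optimal set is a singleton, say $\mathcal X^*=\{x^*\}$. The weak-CE property supplies \emph{some} optimal solution belonging to $\mathcal D(\hat x)$; but the only optimal solution available is $x^*$, so that witness must be $x^*$ itself, whence $x^*\in\mathcal D(\hat x)$. Therefore $\mathcal X^*=\{x^*\}\subset\mathcal D(\hat x)$, which is exactly the strong-CE condition, and so $(c,A,b)$ is a strong CE.

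The main obstacle --- indeed essentially the only subtlety --- is the boundary case in the first part where $\mathcal X^*$ could be empty (the updated LP unbounded or infeasible). In that degenerate situation the inclusion $\mathcal X^*\subset\mathcal D(\hat x)$ would hold vacuously, yet no optimal solution would exist to witness the weak-CE property, and a strong CE would fail to be a weak CE. I would resolve this by making explicit that both definitions are intended only for updated instances that possess an optimal solution, i.e.\ $\mathcal X^*\neq\emptyset$; under this standing assumption the extraction of $x^*$ in the first part is justified and both implications go through.
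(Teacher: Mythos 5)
Your proof is correct and matches the paper's intent: the paper states this proposition without any accompanying proof, treating both claims as immediate consequences of the relation between the existential quantifier in the weak definition and the universal quantifier in the strong definition, which is exactly the argument you give. Your additional observation that $\mathcal X^*$ must be nonempty for the first implication to hold (otherwise the inclusion $\mathcal X^*\subset\mathcal D(\hat x)$ is vacuous while no witness exists for the weak property) is a sensible clarification of a point the paper leaves implicit.
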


\subsection{Relative Counterfactual Explanations.}
In the following we introduce a third definition for counterfactual explanations. In this setup, the stakeholder is not necessarily interested in the parameter changes which lead to an optimal solution contained in the favored solution space, but only in the parameter changes which lead to a favored solution which has a similar objective value as the present problem. Here the stakeholder which is influenced by the decision $\hat x$ asks the following counterfactual question: 

\begin{quote}
    ``\textit{What is the minimal change of the mutable parameters I have to make such that a solution from the favored solution space changes the objective value at most by a fixed factor?}''
\end{quote}

In the following we assume that for the optimal value of the present problem (PP) it holds that $\hat c^\top \hat x\ge 0$.

\begin{definition}[Relative Counterfactual Explanation]
For a given factor $\alpha\in [0,\infty)$ a relative counterfactual explanation is a point $(c,A,b)\in \mathcal H$ such that there exists a feasible solution in 
\[\left\{ x\in \mathcal X: Ax\ge b, \ c^\top x \le \alpha\hat c^\top \hat x\right\}\cap \mathcal D(\hat x) .
\]
\end{definition}
In contrast to weak and strong CEs this definition does not require optimality of a solution $x$ in the favored solution space, but requires only feasibility instead. Note that in case the factor $\alpha$ is smaller than one, we are aiming for an improvement of the optimal objective function value, while for $\alpha \ge 1$ a certain deterioration of the objective function value is accepted.

Note that usually the stakeholder is interested in the smallest changes of the mutable parameters such that the latter definition holds. More formally the stakeholder is looking for a relative CE $(c,A,b)\in \mathcal H$ which is closest to the point $(\hat c,\hat A,\hat b)$ in a distance metric $\delta: \mathcal H \times \mathcal H \to \R_+$.

\begin{proposition}
From the definitions of weak and relative CEs it follows:
    \begin{enumerate}
        \item If $\alpha=1$ and the parameters of the present problem $(\hat c, \hat A, \hat b)$ is a relative CE, then $(\hat c, \hat A, \hat b)$ is a weak CE as well.
        \item For every weak CE there exists an $\alpha$ such that the same point is also a relative CE.
    \end{enumerate}
\end{proposition}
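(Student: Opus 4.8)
The plan is to prove each part directly from the definitions, using optimality of $\hat x$ in the first part and a suitable choice of the factor $\alpha$ in the second.

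For the first claim, I would begin from the hypothesis that $(\hat c,\hat A,\hat b)$ is a relative CE with $\alpha=1$. By definition this produces a point $x$ satisfying $\hat A x\ge \hat b$, $x\in\mathcal D(\hat x)$, $\hat c^\top x\le \hat c^\top\hat x$, and the feasibility encoded in $\mathcal X$ (in particular $x\ge 0$). The crucial observation is that the LP attached to the parameters $(\hat c,\hat A,\hat b)$ is exactly the present problem \eqref{eq:presentProblem}, whose optimal value equals $\hat c^\top\hat x$. Since $x$ is feasible for \eqref{eq:presentProblem}, optimality of $\hat x$ yields $\hat c^\top x\ge \hat c^\top\hat x$; combined with the relative-CE inequality this forces $\hat c^\top x=\hat c^\top\hat x$. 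Hence $x$ is itself an optimal solution of the present problem lying in $\mathcal D(\hat x)$, which is precisely the statement that $(\hat c,\hat A,\hat b)$ is a weak CE.

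For the second claim, let $(c,A,b)$ be an arbitrary weak CE. By definition there is an optimal solution $x^*$ of the associated LP with $x^*\in\mathcal D(\hat x)$; in particular $x^*$ is feasible, so $A x^*\ge b$ and $x^*\ge 0$. To show that the same point is a relative CE it suffices to exhibit an $\alpha\ge 0$ placing $x^*$ in the relative-CE set, i.e. with $c^\top x^*\le \alpha\,\hat c^\top\hat x$. Invoking the standing assumption $\hat c^\top\hat x\ge 0$, I would set $\alpha=0$ when $c^\top x^*\le 0$ and $\alpha=c^\top x^*/(\hat c^\top\hat x)$ otherwise; in both cases $\alpha\in[0,\infty)$ and the required inequality holds, so $x^*$ certifies that $(c,A,b)$ is a relative CE for this $\alpha$.

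The squeezing step in the first claim is immediate and routine. The main obstacle appears in the second claim in the degenerate case $\hat c^\top\hat x=0$: then $\alpha\,\hat c^\top\hat x=0$ for every $\alpha$, so the construction above succeeds only when $c^\top x^*\le 0$, whereas a weak CE with strictly positive optimal value would admit no relative CE at all. I would therefore either sharpen the hypothesis to $\hat c^\top\hat x>0$ (the regime of the motivating cost-minimization applications, where the optimal cost is strictly positive) or handle the sign of $c^\top x^*$ explicitly, since this is the only point at which the choice of $\alpha$ is not automatic.
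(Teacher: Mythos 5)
Your proof is correct and follows exactly the route the paper intends: the paper states this proposition without proof (as an immediate consequence of the definitions), and for part (i) its follow-up remark---that the present problem must then have an alternative optimal solution in $\mathcal D(\hat x)$---is precisely the squeezing argument you give, namely that feasibility of $x$ for \eqref{eq:presentProblem} forces $\hat c^\top x\ge \hat c^\top\hat x$, so the relative-CE inequality with $\alpha=1$ makes $x$ optimal and in $\mathcal D(\hat x)$. For part (ii) your choice $\alpha=c^\top x^*/(\hat c^\top\hat x)$ (or $\alpha=0$ when $c^\top x^*\le 0$) is the natural witness. Your caveat about the degenerate case $\hat c^\top\hat x=0$ is well taken and is a gap in the paper's \emph{statement} rather than in your argument: under the paper's standing assumption $\hat c^\top\hat x\ge 0$ only, a weak CE whose optimal value $c^\top x^*$ is strictly positive while $\hat c^\top\hat x=0$ admits no valid $\alpha$, so part (ii) implicitly requires either $\hat c^\top\hat x>0$ or $c^\top x^*\le 0$; flagging this explicitly strengthens rather than weakens the proof.
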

Note that in case (i) of the latter proposition, since $\hat x\notin \mathcal D(\hat x)$, this means that the problem has multiple optimal soluions and at least one of it lies in $\mathcal D(\hat x)$.

\subsection{Practical Implications.}
In this section, we discuss the different ways of how the different counterfactual explanation concepts presented in the previous subsections can be used in practice. One of the main questions in each application is: ``Who is providing the counterfactual explanation to whom?''

\paragraph{Decision maker provides explanations to stakeholders.} A frequently occuring situation in practice is that a decision maker who has ownership on the data and the optimization problem is calculating the solution $\hat x$ which is afterwards implemented while the involved stakeholders (suppliers or workers) do not have access to the optimization problem. This is the case in all the examples presented in Section \ref{sec:examples}. For example in the diet problem, the decision maker is calculating an optimal solution of the corresponding linear optimization problem to decide how much of each product is bought from which supplier. In this case, a supplier asks the decision maker what would be the minimal change in prices she has to perform such that the decision made by the decision maker would be to buy at least a certain amount of a product from her. If she asks for a strong CE the decision maker will return the minimal change in prices she has to perform such that for any optimal solution (independent of the solution algorithm) the required amount will be purchased from her. In contrast, if she asks for a weak CE the decision maker can return the minimal changes in prices together with the information how much of the product would be purchased from her after the change of the prices. In this case it could be that the final decision is not meeting her requirements since not in every optimal solution the requirements have to be fulfilled for a weak CE. Additionally, if the decision maker changes her solution algorithm (or its settings) in the future it can be that the final decision will be different. In case of a relative CE the supplier has to provide the factor $\alpha$ and the decision maker will return again the minimum change in prices. However, here the decision maker has to decide if an increase/decrease of the costs by a factor of $\alpha$ is desirable and if a corresponding solution can be implemented.

\paragraph{Analysis of the problem.} Especially the concept of relative CEs can be used to perform an analysis of the problem which is faced by a decision maker. In the facility location problem to optimize geospatial accessibility to supplies in case of a disaster the decision maker maybe wants to analyze the effects of parameter changes on the rising costs. For example, an increase of the population in a certain region can effect the costs. In case of a what-if analysis, the decision maker can answer questions of the form: if the population in every region increases by $2\%$ what will be my optimal costs? However, by solving the relative CE problem, we can answer questions now of the form: ``What is the minimal increase of the population such that my costs will increase by at most $3\%$?'' 

\paragraph{Feasibility analysis.}
The concept of CEs is also beneficial in the situation where the present problem is infeasible and the decision maker wants to find the smallest changes in the parameter space which would lead to a feasible problem. In this case the weak CE problem can be solved with $\mathcal D(\hat x)=\R^n$. Consider for example a network flow problem where the capacities are given and the different involved stakeholders define the demand and supply for each node. It could be that for the given situation there is no feasible flow. The decision maker can now ask the questions: ``What is the minimum change in the capacity parameters that I have to perform such that there exists a feasible flow?''
In this situation, we could also apply relative CEs without the constraint for the objective value. Furthermore, the concept of strong CEs is equivalent to weak CEs in this example.
%, but usually much more expensive to solve. \todo{Dick will revise the latter part of this paragraph.}

\section{Calculating Counterfactual Explanations.}
In this section, we present optimization models which can be used to calculate all three variants of counterfactual explanations presented in Section \ref{sec:definitions}. In the following we assume that the present problem is feasible.

\subsection{Weak Counterfactual Explanations.}\label{sec:results_weakCE}
The weak counterfactual explanation problem is defined as
\begin{subequations}
\label{eq:WCEP} 
\begin{align}
(\text{WCEP}):\quad \inf_{x,c,A,b} \ & \delta\left( (c,A,b), (\hat c, \hat A, \hat b)\right) \label{eq:weak_objective}\\
s.t. \quad & x\in \argmin_{z: Az\ge b, z\ge 0} c^\top z, \label{eq:weak_optimality_condition}\\
& x\in D(\hat x), \label{eq:weak_favored_sol_space}\\
&(c,A,b)\in \mathcal H, \label{eq:weak_mutable_space}
\end{align}
\end{subequations}
where $\delta: \mathcal H \times \mathcal H\to \R_+$ is a given distance function in the parameter space. Note that in the objective function \eqref{eq:weak_objective}, we minimize the distance to the parameters of the present problem. Constraint \eqref{eq:weak_optimality_condition} imposes that each feasible $x$ is an optimal solution for the problem with chosen parameters $(c,A,b)$. Constraint \eqref{eq:weak_favored_sol_space} ensures that  the optimal solution $x$ lies in the favored solution space and constraint \eqref{eq:weak_mutable_space} implies that we only consider parameter changes inside the mutable parameter space $\mathcal H$. The WCEP can be interpreted as an optimistic bilevel problem; see e.g. \cite{dempe2015bilevel,kleinert2021survey}.

In the following theorem, we present a reformulation of the latter problem as a bilinear problem.

\begin{theorem}\label{thm:weakCE_reformulation}
Problem WCEP is equivalent to the following problem:
\begin{subequations}
\begin{align}
(\text{WCEP'}):\quad \inf_{x,y,c,A,b} \ & \delta\left( (c,A,b), (\hat c, \hat A, \hat b)\right) \label{eq:weak_reformulation_objective}\\
s.t. \quad & c^\top x \le b^\top y, \label{eq:weak_strong_duality}\\
& A^\top y \le c, \label{eq:weak_dual_feasible}\\
&Ax\ge b, \label{eq:weak_primal_feasible}\\
&x\in D(\hat x),\\
&(c,A,b)\in \mathcal H, \\
& x,y\ge 0. \label{eq:weak_greater_zero_variables}
\end{align}
\end{subequations}
\end{theorem}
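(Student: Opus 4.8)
The plan is to reduce the bilevel optimality condition \eqref{eq:weak_optimality_condition} to linear-programming duality conditions, applied to the lower-level problem $\min\{c^\top z : Az\ge b,\ z\ge 0\}$ whose dual is $\max\{b^\top y : A^\top y\le c,\ y\ge 0\}$. Concretely, I would fix an arbitrary parameter tuple $(c,A,b)\in\mathcal H$ and show that a point $x$ satisfies \eqref{eq:weak_optimality_condition} if and only if there exists an auxiliary vector $y$ satisfying the dual feasibility \eqref{eq:weak_dual_feasible}, the primal feasibility \eqref{eq:weak_primal_feasible}, the nonnegativity \eqref{eq:weak_greater_zero_variables}, and the duality-gap inequality \eqref{eq:weak_strong_duality}. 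Since the objective \eqref{eq:weak_reformulation_objective} equals \eqref{eq:weak_objective} and depends only on $(c,A,b)$ and not on $y$, this pointwise equivalence shows that the feasible regions of WCEP and WCEP$'$ have identical projections onto the $(x,c,A,b)$-space; the two infima therefore coincide and their minimizers in $(c,A,b)$ correspond.

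For the forward direction I would assume $x$ satisfies \eqref{eq:weak_optimality_condition}, i.e.\ $x$ is optimal for the lower-level LP. Membership in the $\argmin$ means the LP is feasible (witnessed by $x$ itself) with finite, attained optimal value $c^\top x$, so LP strong duality produces a dual-optimal $y$ with $A^\top y\le c$, $y\ge 0$, and $b^\top y=c^\top x$. This $y$ yields \eqref{eq:weak_dual_feasible} and \eqref{eq:weak_greater_zero_variables} directly, the equality $c^\top x=b^\top y$ gives the inequality \eqref{eq:weak_strong_duality}, and optimality of $x$ supplies \eqref{eq:weak_primal_feasible}; hence $(x,y,c,A,b)$ is feasible for WCEP$'$.

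For the converse I would take any $(x,y)$ satisfying \eqref{eq:weak_strong_duality}--\eqref{eq:weak_primal_feasible} and \eqref{eq:weak_greater_zero_variables}. Weak LP duality applied to this feasible primal--dual pair gives $c^\top x=c^\top x-y^\top(Ax-b)+y^\top(Ax-b)\ge (c-A^\top y)^\top x+b^\top y\ge b^\top y$, where the two inequalities use $y\ge 0,\ Ax\ge b$ and $c-A^\top y\ge 0,\ x\ge 0$ respectively. Combined with \eqref{eq:weak_strong_duality} this forces $c^\top x=b^\top y$. Then for any primal-feasible $z$ (any $z\ge 0$ with $Az\ge b$), the same weak-duality estimate gives $c^\top z\ge b^\top y=c^\top x$, so $x$ minimizes $c^\top z$ over the feasible set and therefore satisfies \eqref{eq:weak_optimality_condition}.

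The only step that uses more than weak duality is the forward direction's appeal to \emph{strong} duality to manufacture a dual-optimal $y$, and this is exactly where care is needed; it is justified precisely because $x\in\argmin$ guarantees the lower-level LP is feasible and bounded, so the optimal value is attained on both sides. Everything else is routine, once one handles the existential quantifier on $y$ via the projection argument, which is harmless because $\delta$ does not involve $y$. I would close by remarking that the pairings $A^\top y$, $Ax$, $b^\top y$, and $c^\top x$ each couple two blocks of decision variables, which is what turns WCEP$'$ into a (nonconvex) bilinear program as claimed.
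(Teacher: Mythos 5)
Your proposal is correct and follows essentially the same route as the paper: replace the lower-level $\argmin$ condition by primal feasibility, dual feasibility, and the duality-gap inequality, then invoke LP strong duality (forward) and weak duality (converse). You simply spell out in detail both directions of the equivalence that the paper's proof compresses into a single appeal to ``the classical strong duality result.''
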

\begin{proof}
We replace the optimality constraint \eqref{eq:weak_optimality_condition} by constraints \eqref{eq:weak_strong_duality}--\eqref{eq:weak_primal_feasible} and \eqref{eq:weak_greater_zero_variables}, where \eqref{eq:weak_primal_feasible} and \eqref{eq:weak_greater_zero_variables} ensure primal feasibility for $x$, \eqref{eq:weak_dual_feasible} and \eqref{eq:weak_greater_zero_variables} ensure dual feasibility for $y$, and \eqref{eq:weak_strong_duality} ensures that primal and dual objective function values are equal. By the classical strong duality result, it follows that any feasible $x$ must be optimal for the minimization problem in \eqref{eq:weak_optimality_condition}.
\end{proof}

\noindent We next show the following {\em structural} properties of the WCEP:
\begin{enumerate}
    \item The projection of the feasible region on the $(c,A,b)$-space can be open.
    \item The projection of the feasible region on the $(c,A,b)$-space may be non-convex and disconnected, even if $\mathcal H$ and $\mathcal D(\hat x)$ are convex sets, and even if only the objective parameters in $c$ are allowed to change (see Example \ref{ex:feasible_region_non-convex} and \ref{ex:feasible_region_disconnected}).
    \item The projection of the feasible region on the $x$-space can be convex if the $0$-cost vector is contained in $\mathcal H$ and $\mathcal D(\hat x)$ is convex.
    \item The projection of the feasible region on the $x$-space (and $(x,y)$-space) can be non-convex (see Example \ref{ex:nonconvex_region_x}).
    % \item \textcolor{red}{Can we show positive results? Unfortunately even if only one $c$ parameter is allowed to change the linearization makes a problem since $c$ does not appear as a product with $x$ in the objective function.}
    % \textcolor{green}{Answer Dick: Indeed, I don't see that we can prove positive results. However, the reason is in my opinion not the objective, but constraint \eqref{eq:weak_dual_feasible}. Later, for relative CE we will have some special cases for the objective for which we can prove tractability. }
\end{enumerate}

\noindent We start with showing that the feasible region of the $(c,A,b)$-space can be open. Due to this observation we cannot guarantee that an optimal solution of the WCEP exists, which is why we use the infimum instead of the minimum operator.
\begin{example}\label{thm:openess_weakCE}
Consider the present problem 
\begin{align*}
    \min \ & x \\
    s.t. \quad & \hat ax=\hat a \\
    x\ge 0
\end{align*}
where $\hat a=0$. The unique optimal solution is $\hat x = 0$. Assume the favored solution space $\mathcal D(\hat x) = \{ x\in\R: x\ge 1\}$ and $\mathcal H = [0,1]$. For every $a>0$ the optimal solution of the problem is $x=1$ which lies in $\mathcal D(\hat x)$. Hence, the feasible region for parameter $a$ in WCEP is $(0,1]$ which is open. In this case the WCEP does not have an optimal solution. 
\end{example}

% \noindent We start with showing that the feasible region of the $(c,A,b)$-space is closed. A proof is provided in the Appendix.
% \begin{theorem}\label{thm:closedness_weakCE}
%     If $\mathcal D(\hat x)$ is compact and $\mathcal H$ is compact, then the projection of the feasible region of WCEP' on the $(c,A,b)$-space is compact.
% \end{theorem}

% By applying the Weierstrass theorem together with Theorem \ref{thm:closedness_weakCE} we can conclude with the following corollary.

% \begin{corollary}
%     If $\delta$ is continuous, and $\mathcal D(\hat x)$ and $\mathcal H$ are compact spaces, then the WCEP' either has an optimal solution or it is infeasible.
% \end{corollary}

The following example shows that the feasible region projected on the $(c,A,b)$-variables can be non-convex.

\begin{example}\label{ex:feasible_region_non-convex}
The following example shows that the projection of the feasible region of WCEP on the variable space $(c,A,b)$ may be non-convex; see Figure \ref{fig:example_non-convex} for an illustration. Consider the present problem 
\begin{align*}
\max \ & x_1 \\
s.t. \quad & |x_1|+|x_2|\le 1,
\end{align*}
which can be reformulated into a linear optimization problem. Note that we do not require here that $x\ge 0$ as we do in Theorem \ref{thm:weakCE_reformulation}. However, this can be easily achieved by shifting the feasible region into the positive orthant, e.g., by replacing $x_i$ by $x_i-1$ everywhere in the problem. The problem has the unique optimal solution $\hat x = (1,0)$. Suppose now that the favored solution space is given by 
\[
\mathcal D=\left\{ x\in\R^2: \ -0.5\le x_1 \le 0.5\right\}.
\]
Furthermore, we assume that only the two objective parameters $c_1,c_2$ can be changed to any value. Let int$(\mathcal S)$ denote the interior of a set $\mathcal S$. Then, the following statements hold:
\begin{itemize}
    \item For any $(c_1,c_2)\in \text{int } (\mathcal C_1)$ where $\mathcal C_1=\left\{(c_1,c_2)= \lambda_1\binom{1}{1} + \lambda_2\binom{1}{-1}, \lambda_1,\lambda_2\ge 0\right\}$
    the unique optimal solution of the present problem is $x^1=(1,0)\notin \mathcal D$, and hence, $\mathcal C_1$ does not contain any feasible weak CE.
    \item For any $(c_1,c_2)\in \text{int }(\mathcal C_2)$ where $\mathcal C_2=\left\{(c_1,c_2)= \lambda_1\binom{-1}{1} + \lambda_2\binom{-1}{-1}, \lambda_1,\lambda_2\ge 0\right\}$
    the unique optimal solution of the present problem is $x^2=(-1,0)\notin \mathcal D$ and hence $\mathcal C_2$ does not contain any feasible weak CE.
    \item For any $(c_1,c_2)\in \mathcal C_3$ where $\mathcal C_3=\left\{(c_1,c_2)= \lambda_1\binom{1}{1} + \lambda_2\binom{-1}{1}, \lambda_1,\lambda_2\ge 0\right\}$
    the point $x^3=(0,1)\in\mathcal D$ is optimal, and hence, all points in $\mathcal C_3$ are weak CEs.
    \item For any $(c_1,c_2)\in \mathcal C_4$ where $\mathcal C_4=\left\{(c_1,c_2)= \lambda_1\binom{1}{-1} + \lambda_2\binom{-1}{-1}, \lambda_1,\lambda_2\ge 0\right\}$
    the point $x^4=(0,-1)\in \mathcal D$ is optimal, and hence, all points in $\mathcal C_4$ are weak CEs.
\end{itemize}
Note that $\text{int } (\mathcal C_1)\cup \text{int }(\mathcal C_2)\cup  \mathcal C_3 \cup \mathcal C_4=\mathbb R^2$. Thus, the region of feasible weak counterfactuals is $\mathcal C_3 \cup \mathcal C_4$, which is non-convex.
\end{example}
\begin{figure}
    \centering
    \includegraphics[scale=0.3]{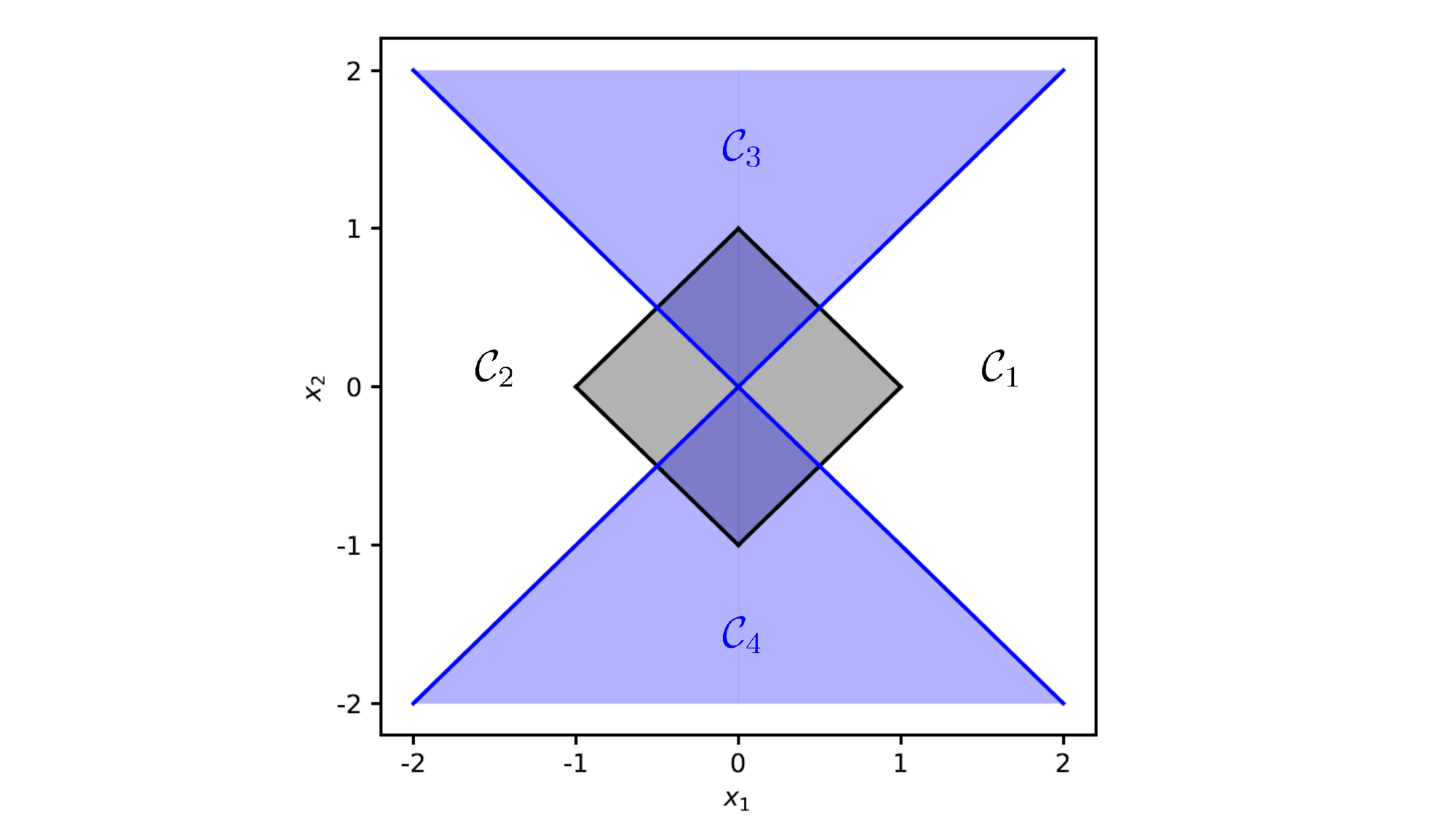}
    \caption{The feasible region of the present problem of Example \ref{ex:feasible_region_non-convex} (in grey) and the feasible region of the weak CEs for the objective parameters (in blue).}
    \label{fig:example_non-convex}
\end{figure}

% \begin{proposition}\label{prop:x_feasible_convex}
% Assume that $\hat A$ and $\hat b$ are fixed and $\mathcal H$ is a convex set in the $c$-space. Furthermore let $\mathcal D(\hat x)$ be a convex set. Then the projection of the feasible region of Problem WCEP' on the $x$-space is convex.
% \end{proposition}
% \begin{proof}
%     Let $x_1$ and $x_2$ be two feasible points of Problem WCEP' where $x_1\neq x_2$. Then there exist $c_1,c_2\in\mathcal H$ and $y_1,y_2\ge 0$, such that the two points $(x_1,y_1,c_1,\hat A, \hat b)$ and $(x_2,y_2,c_2,\hat A, \hat b)$ are both feasible for Problem WCEP'. It can easily be verified that the point
%     \[
%     \left(\lambda x_1 + (1-\lambda )x_2, \lambda y_1 + (1-\lambda )y_2,  \right)
%     \]
% \end{proof}

\begin{example}\label{ex:feasible_region_disconnected}
The following example shows that the projection on the feasible region of the variables $c,A,b$ may be disconnected. Consider the same setup as in Example \ref{ex:feasible_region_non-convex} and additionally define the mutable parameter space $\mathcal H=\{(c_1,c_2): 0.5\le c_1,c_2 \le 1.5\}$. Then the feasible region of the weak CEs is given as $(\mathcal C_3\cup \mathcal C_4)\cap \mathcal H$ which is a disconnected set.
\end{example}

\begin{proposition}
If $(0,\hat A,\hat b)\in \mathcal H$, then the projection of the feasible region of Problem WCEP onto the $x$-space is convex. 
\end{proposition}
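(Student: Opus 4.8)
The plan is to avoid reasoning about the full feasible region of WCEP directly and instead pin down its $x$-projection as an explicit convex set. Write $P_x$ for the projection of the feasible region of \eqref{eq:WCEP} onto the $x$-space, i.e.\ the set of $x$ for which there exist parameters $(c,A,b)\in\mathcal H$ making $(x,c,A,b)$ feasible. Assuming, as in the accompanying structural property, that $\mathcal D(\hat x)$ is convex, I would prove that $P_x$ coincides with $F=\{x\ge 0:\ \hat A x\ge \hat b\}\cap\mathcal D(\hat x)$; once this identity is established, convexity of $P_x$ is immediate.

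For the inclusion $F\subseteq P_x$ I would use the hypothesis $(0,\hat A,\hat b)\in\mathcal H$ directly. Fix any $x\in F$ and choose the parameters $(c,A,b)=(0,\hat A,\hat b)$. The associated LP then has the identically-zero objective $0^\top z$, so \emph{every} feasible point is optimal; since $\hat A x\ge\hat b$ and $x\ge 0$, the point $x$ is feasible and hence optimal, so it satisfies the optimality condition \eqref{eq:weak_optimality_condition}. Because also $x\in\mathcal D(\hat x)$ and $(0,\hat A,\hat b)\in\mathcal H$, the tuple $(x,0,\hat A,\hat b)$ is feasible for WCEP, giving $x\in P_x$. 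For the reverse inclusion $P_x\subseteq F$, take any $x\in P_x$: feasibility of WCEP forces $x\in\mathcal D(\hat x)$, while the optimality condition \eqref{eq:weak_optimality_condition} in particular requires $x$ to be primal feasible, namely $\hat A x\ge\hat b$ and $x\ge 0$. Hence $x\in F$, and the two inclusions yield $P_x=F$.

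The concluding step is then routine: $F$ is the intersection of the polyhedron $\{x\ge 0:\ \hat A x\ge\hat b\}$ with the convex set $\mathcal D(\hat x)$, so it is convex. The step I expect to carry the real content is the first inclusion. The key mechanism is that the zero cost vector degenerates optimality into mere feasibility, which is exactly what lets the otherwise bilinear, generally non-convex optimality constraint \eqref{eq:weak_optimality_condition} collapse onto a single fixed linear feasibility system; this is where the hypothesis $(0,\hat A,\hat b)\in\mathcal H$ is indispensable. The main obstacle to watch is that this collapse relies on the feasibility region in the reverse inclusion being the \emph{fixed} polyhedron determined by $(\hat A,\hat b)$: the argument goes through precisely because primal feasibility at the chosen optimal parameters confines $x$ to $\{x\ge 0:\ \hat A x\ge\hat b\}$, and one should verify that the mutable parameter space leaves the constraint data effectively at $(\hat A,\hat b)$ for this pinning to hold.
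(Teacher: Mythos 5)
Your argument is essentially the paper's: the paper likewise observes that with the cost vector $(0,\hat A,\hat b)$ every feasible point of the present problem is optimal, and concludes that the $x$-projection equals the feasible set of (PP) intersected with $\mathcal D(\hat x)$, hence is convex. Your reverse inclusion (and the paper's implicit equality claim) both rest on the constraint data being pinned at $(\hat A,\hat b)$ — exactly the caveat you flag at the end — since if $A$ or $b$ were mutable the projection could strictly exceed that set; so the two proofs coincide, with yours merely making explicit the assumption the paper leaves tacit.
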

\begin{proof}
For the parameters $(0,\hat A,\hat b)$ all feasible points of the present problem are optimal. Hence, the projection onto the $x$-space is the feasible set of the present problem intersected with $\mathcal D(\hat x)$, which is convex.
\end{proof}
\begin{example}\label{ex:nonconvex_region_x}
The following example shows that the projection of the feasible region of WCEP on the $x$-space may be non-convex. %To show this 
Consider the linear optimization problem 
\begin{align*}
\max \ & c_1x_1 + c_2x_2 \\
s.t. \quad & x_1 + x_2 \le 2, \\
& x_1 - x_2 \le 0, \\
& x_1,x_2\ge 0, 
\end{align*}
where the feasible region is shown in Figure \ref{fig:example_non-convex_on_x}. We assume that only the cost parameters can change, where $(c_1,c_2)\in \{1\}\times [-1,1]$ and set $\mathcal D=\R_+^2$. Note that the point $z_1=(0,2)$ is feasible and optimal for cost vector $c=(1,1)$. On the other hand, the point $z_2=(0,0)$ is feasible and optimal for cost vector $c'=(1,-1)$. However, for the point $\tilde z = \frac{1}{2} z_1 + \frac{1}{2}z_2 = (0,1)$ there exists no cost vector in $\{1\}\times [-1,1]$ for which it is optimal. The reason is that for $c_2>0$, the point $(0,2)$ always has a strictly better objective function value, while for $c_2<0$, the point $(0,0)$ always has a strictly better objective function value. For $c_2=0$, the point $(1,1)$ has a strictly better objective function value, since $c_1=1$. Hence, the projection on the $x$-space is not convex. 
\end{example}

\begin{figure}
    \centering
    \includegraphics[scale=0.3]{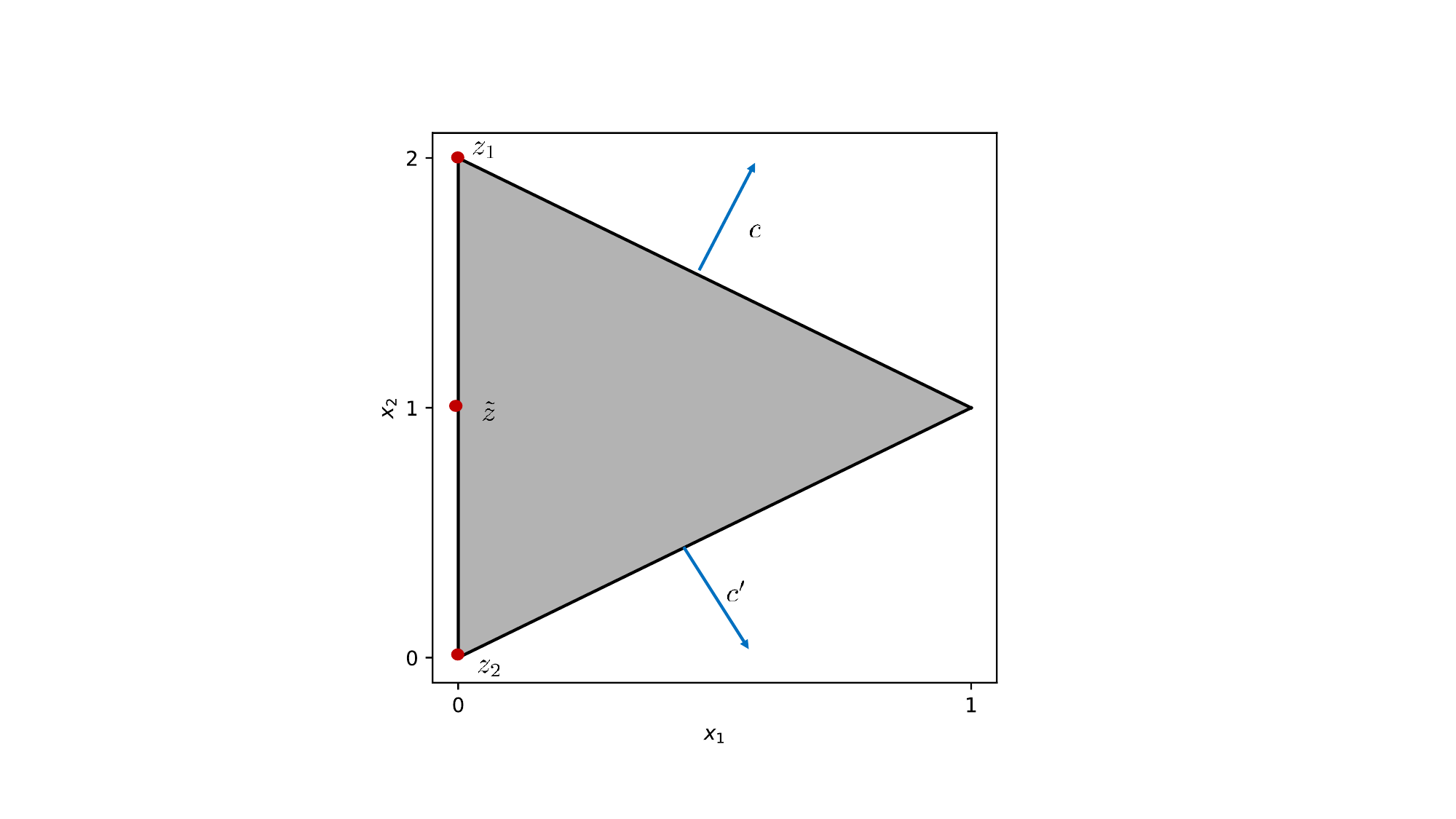}
    \caption{The feasible region of the problem of Example \ref{ex:nonconvex_region_x} (in grey).}
    \label{fig:example_non-convex_on_x}
\end{figure}

The conclusion is that WCEP' is in general a hard non-convex problem. If $D(\hat{x})$, $\mathcal H$, and 
$\delta\left( (c,A,b), (\hat c, \hat A, \hat b)\right)$ are representable by linear constraints, then WCEP' is a bilinear optimization problem.

\subsection{Strong Counterfactual Explanations.}\label{sec:results_strongCE}
The strong counterfactual explanation problem is defined as follows 
\begin{subequations}
\begin{align}
(\text{SCEP}): \quad \inf_{x,c,A,b} \ & \delta\left( (c,A,b), (\hat c, \hat A, \hat b)\right) \label{eq:strong_objective}\\
s.t. \quad & x\in D(\hat x),  \quad \forall \ x\in \argmin_{z: Az\ge b, z\ge 0} c^\top z, \label{eq:strong_optimality}\\
&(c,A,b)\in \mathcal H,\label{eq:strong_mutable_space}
\end{align}
\end{subequations}
where $\delta: \mathcal H \times \mathcal H\to \R_+$ is a given distance function in the parameter space. Note that constraint \eqref{eq:strong_optimality} ensures that all optimal solutions of the problem with selected parameters $(c,A,b)$ are contained in the favored solution space. The SCEP can be interpreted as a pessimistic bilevel problem; see \cite{wiesemann2013pessimistic}.

Note that we are using the infimum instead of the minimum operator in Problem SCEP which is due to the fact that the feasible region in the $(c,A,b)$-parameters can be open as the following example shows.

\begin{example}
Consider the present problem
\begin{align*}
\min \ &-x_1 \\
s.t. \quad & 0\le x_1,x_2\le 1
\end{align*}
and assume that only the objective parameters are mutable. Consider the favored solution space $\mathcal D = \{ x: 0.5\le x_1 \le 1, \ 0\le x_2 \le 0.5\}$. The only face of the feasible region which lies entirely in $\mathcal D$ is the extreme point $(1,0)$. This extreme point is the unique optimal solution for all $(c_1,c_2)\in (0,\infty) \times (-\infty, 0)$ which is an open set. Hence, the feasible region for $(c_1,c_2)$ of the SCEP for this example is open.  
\end{example}

The possibility of an open feasible region for strong CEs is in contrast to the result proved later in Theorem \ref{thm:closedness_relativeCE} which shows that for relative CEs the feasible region is always closed.

In the following theorem we show how to reformulate SCEP into a bilinear problem.

\begin{theorem}\label{thm:SCEP}
Assume the favorable solution space is given by
\[
D(\hat x) = \left\{ x\ge 0: Wx\le h\right\}
\]
for $W\in \R^{q\times n}$ and $h\in \R^q$. Then, SCEP has the same objective value as the following problem:
\begin{align*}
(\text{SCEP'}): \inf_{c,A,b,\Lambda,\Gamma,\tau} \ & \delta\left( (c,A,b), (\hat c, \hat A, \hat b)\right) \\
s.t. \quad & -\Lambda b + \Gamma c \le h, \\
& c \tau^\top  - A^\top \Lambda^\top \ge W^\top, \\
& -b \tau^\top + A\Gamma\ge 0, \\
&(c,A,b)\in \mathcal H, \\
& \Lambda\in \R_+^{q\times m}, \Gamma \in \R_+^{q\times n}, \tau\in \R_+^{q}.
\end{align*}
\end{theorem}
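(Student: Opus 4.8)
The plan is to derive SCEP' by characterizing when \emph{every} optimal solution of the inner LP lies in $\mathcal D(\hat x)$. The strong optimality constraint \eqref{eq:strong_optimality} is a $\forall$-statement over the optimal set $\mathcal X^* = \argmin_{z: Az\ge b, z\ge 0} c^\top z$, so the first step is to reformulate the condition $\mathcal X^* \subseteq \mathcal D(\hat x) = \{x\ge 0: Wx\le h\}$ in a way that eliminates the inner universal quantifier. Since $\mathcal X^*$ is itself the solution set of a linear program, the natural approach is: for each row $k$ of the constraint system $Wx\le h$ (i.e.\ each defining inequality $W_k x \le h_k$ of $\mathcal D(\hat x)$), require that $\max\{W_k x : x\in \mathcal X^*\} \le h_k$. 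In other words, $\mathcal X^*\subseteq \mathcal D(\hat x)$ holds if and only if for every $k$ the optimal value of the auxiliary LP $\max_x \{W_k x : Ax\ge b,\ x\ge 0,\ c^\top x \le c^\top x^*\}$ is at most $h_k$, where $c^\top x^*$ denotes the optimal value of the inner problem.

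The second step is to linearize each of these auxiliary maximization problems by LP duality so that the ``$\le h_k$'' condition becomes a set of jointly-feasible linear(-ish) constraints rather than a nested optimization. I would write $\mathcal X^*$ using the standard representation $\{x\ge 0: Ax\ge b,\ c^\top x\le t\}$ where $t$ is the (unknown) optimal value, and dualize the inner $\max_x W_k x$ over this polyhedron. The dual will have multipliers $\Lambda_k$ (a row, collected into the matrix $\Lambda\in\R_+^{q\times m}$) attached to $Ax\ge b$ and a scalar multiplier $\tau_k$ (collected into $\tau\in\R_+^q$) attached to $c^\top x\le t$; the $\Gamma\in\R_+^{q\times n}$ multipliers presumably handle the coupling that expresses the optimal value $t$ itself (via strong duality of the \emph{inner} LP, which introduces the primal-optimal-value term $b^\top$ something). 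Matching the dual-feasibility constraint $A^\top \Lambda_k^\top + \tau_k c \ge W_k^\top$ and the dual-objective bound $\Lambda_k b + \tau_k t \le h_k$ across all $k$, and then substituting the strong-duality expression for $t$, should produce exactly the three matrix inequalities $-\Lambda b + \Gamma c\le h$, $c\tau^\top - A^\top\Lambda^\top \ge W^\top$, and $-b\tau^\top + A\Gamma\ge 0$ once everything is stacked over $k=1,\dots,q$.

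The main obstacle is correctly accounting for the \emph{value} $t$ of the inner LP, since $t$ is not a given constant but is itself determined by $(c,A,b)$; this is what forces the extra multiplier block $\Gamma$ and the bilinear products $\Gamma c$, $A\Gamma$, $c\tau^\top$, $b\tau^\top$ to appear. The cleanest way to handle this is to replace $t$ by its dual characterization $t = b^\top y$ for a dual-optimal $y$ and then absorb $y$ into the $\Gamma$-multipliers, so that strong duality of the inner problem and the per-row dual bounds are encoded simultaneously; I expect the bilinear coupling to emerge precisely from multiplying the inner dual variables by the per-row multipliers $\tau_k$. I would then verify the equivalence in both directions: given a strong CE with parameters $(c,A,b)$, exhibit feasible $(\Lambda,\Gamma,\tau)$ from the optimal dual solutions of the $q$ auxiliary LPs; conversely, given feasible $(c,A,b,\Lambda,\Gamma,\tau)$, use weak LP duality to conclude $W_k x\le h_k$ for every $x\in\mathcal X^*$, hence $\mathcal X^*\subseteq \mathcal D(\hat x)$. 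Finally I would note that the theorem claims equality of \emph{objective values} rather than of feasible regions, matching the earlier observation that the feasible region can be open, so no minimizer need exist and the argument only needs to preserve the infimum.
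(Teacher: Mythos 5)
Your proposal is correct and follows essentially the same route as the paper: the paper also represents the optimal set via the joint $(x,y)$ polyhedron $\{x\ge 0 \mid \exists y\ge 0:\ c^\top x\le b^\top y,\ Ax\ge b,\ A^\top y\le c\}$ (your ``replace $t$ by $b^\top y$ and absorb $y$'s constraints into the $\Gamma$-block''), then treats $\mathcal X^*\subseteq \mathcal D(\hat x)$ as a robust constraint and dualizes $\max w_k^\top x$ row by row with a separate copy of the multipliers $(\Lambda_k,\Gamma_k,\tau_k)$. The identification of where each multiplier block and each bilinear product comes from, and the remark that only the objective value (not the feasible region) is preserved, all match the paper's argument.
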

\begin{proof}
First, we can reformulate SCEP as
\begin{align*}
\inf_{c,A,b} \ & \delta\left( (c,A,b), (\hat c, \hat A, \hat b)\right) \\
s.t. \quad & x\in D(\hat x), \quad \forall \ x\in \mathcal U,\\
&(c,A,b)\in \mathcal H,
\end{align*}
where $U=\{ x\ge 0 \ | \  \exists y\ge 0: \ c^\top x\le b^\top y, Ax\ge b, A^\top y\le c\}$. In the description of $U$, we use the same optimality conditions as in the proof of Theorem \ref{thm:weakCE_reformulation}. The latter problem is a robust optimization problem with decision dependent uncertainty set. By using the classical constraint-wise dualization trick from robust optimization, we can reformulate the problem as
\begin{align*}
\inf_{c,A,b} \ & \delta\left( (c,A,b), (\hat c, \hat A, \hat b)\right) \\
s.t. \quad & \max_{x\in \mathcal U} w_i^\top x \le h \quad i=,1\ldots ,p \\
&(c,A,b)\in \mathcal H ,
\end{align*}
where $w_i$ is the $i$-th row of matrix $W$. We can now dualize each of the maximization problems appearing on the left-hand-side separately (introducing a copy of the dual variables for each one) which leads to the formulation presented in the theorem.
\end{proof}

In the following we show that, as for weak CEs, the feasible region of the variables $c,A,b$ may be non-convex and disconnected.

\begin{example}\label{ex:feasible_region_non-convex_strongCE}
Consider again the same present problem in Example \ref{ex:feasible_region_non-convex} together with the same favored solution space. In addition, we assume that only the two objective parameters $c_1,c_2$ can be changed to any value. Then, the set of objective parameters for which every optimal solution is contained in $\mathcal D(\hat x)$ is $\text{int }(\mathcal C_3)\cup  \text{int }(\mathcal C_4)$, where $\mathcal C_3$ and $\mathcal C_4$ are defined as in Example \ref{ex:feasible_region_non-convex}. Note that in contrast to Example \ref{ex:feasible_region_disconnected}, here for the strong CEs we have to choose the interior of the sets since, e.g., for direction $\binom{1}{1}$ the whole line between points $\binom{0}{1}$ and $\binom{1}{0}$ is optimal, and hence, not every optimal solution is contained in $\mathcal D(\hat x)$. Consequently, $\binom{1}{1}$ is not a strong CE. Like in Example \ref{ex:feasible_region_non-convex} and Example \ref{ex:feasible_region_disconnected}, we can show that the feasible region for the strong CEs is non-convex and disconnected.
\end{example}

\subsection{Relative Counterfactual Explanations.}
In the following we consider relative counterfactual explanations. We assume that for the optimal value of the present problem it holds that $\hat c^\top \hat x\ge 0$. Calculating relative counterfactual explanations is computationally easier than weak or strong CEs, since we do not have to handle the optimality condition \eqref{eq:weak_optimality_condition} or \eqref{eq:strong_optimality}. In fact, the relative counterfactual explanation problem can be formulated as 

\begin{subequations}
\begin{align}
(\text{RCEP}):\quad \min_{x,c,A,b} \ & \delta\left( (c,A,b), (\hat c, \hat A, \hat b)\right) \label{constr:bilinear_relative_CE_formulation_a}\\
s.t. \quad & c^\top x \le \alpha\hat c^\top \hat x, \label{constr:bilinear_relative_CE_formulation_b}\\
&Ax\ge b, \label{constr:bilinear_relative_CE_formulation_c}\\
&x\in D(\hat x), \label{constr:bilinear_relative_CE_formulation_d}\\
&(c,A,b)\in \mathcal H \label{constr:bilinear_relative_CE_formulation_e}\\
& x\ge 0. \label{constr:bilinear_relative_CE_formulation_f}
\end{align}
\label{eq:bilinear_relative_CE_formulation} 
\end{subequations}

This is again a bilinear problem. %However, we will show in the following that in several relevant cases the problem can be transformed into an equivalent problem, where the feasible region is convex. In the rest of the section we make the following assumptions.
While later we will see that the projection on the $x$-space is convex, this may be not the case for the projection on the $(c,A,b)$-space, as the following example illustrates.

\begin{example}\label{ex:RCEP_non-convex_cAb}
Consider the following simple example, with $c=1$, $A=(\alpha, -\alpha, 1,-1)^T$, 
$b=(1,-1,\beta,-\beta)^T$, 
$0\leq x_1 \leq 1$,
$0\leq \alpha \leq 2$, and $0\leq \beta \leq 1$. Then the projection of the feasible region of the relative counterfactual problem ($ \mathcal P$) on the $(\alpha,\beta)$ space is:
\[ \{ (\alpha,\beta) \mid \exists x_1 : x_1  = \beta, \; \alpha x_1 = 1, \; 0\leq x_1 \leq 1, \; 0\leq \alpha\leq 2, \; 0\leq \beta \leq 1 \} = \]
\[\left\{ (\alpha,\beta) \mid  \beta=\frac{1}{\alpha}, \; \frac{1}{2}\leq \alpha \leq 1\right\}, \]
which is not convex.
\end{example}

For the rest of this section we make the following assumptions which we will need to show that the RCEP (in contrast to Example \ref{ex:RCEP_non-convex_cAb}) can be transformed into a convex problem. Furthermore, the assumptions ensure that an optimal solution of the RCEP always exists. In the following we denote $[n]=\left\{ 1,2,\ldots, n\right\}$ for any positive integer $n$.

\begin{assumption}\label{ass:H_columnwise}
The mutable parameter space $\mathcal H$ is compact, convex and \textit{columnwise}, i.e., we have $\mathcal H = \mathcal H_1 \times \ldots \mathcal H_n \times \mathcal H_{n+1}$ where $(c,A,b)\in\mathcal H$ if and only if $(c_j,A_j)\in \mathcal H_j$ for all $j\in [n]$ and $b\in \mathcal H_{n+1}$ and all sets $\mathcal H_1,\ldots , \mathcal H_{n+1}$ are bounded and convex.
\end{assumption} 
\begin{assumption}\label{ass:D_convex}
The favored solution space $\mathcal D (\hat x)$ is a compact and convex set. 
\end{assumption}
\begin{assumption}\label{ass:delta_separable}
The distance measure $\delta$ is continuous and such that
\[
\delta\left( (c,A,b), (\hat c, \hat A, \hat b)\right) = \sum_{j=1}^{n}  \delta_j\left( (c_j,A_j),(\hat c_j,\hat A_j)\right) + \delta_{n+1}\left( b \right), 
\]
where $\delta_j: \R^{m+1}\to \R_+$, for each $j\in [n]$, and $\delta_{n+1}: \R^m \to \R_+$.
\end{assumption}

First we show that under the latter assumptions the feasible region in the $(c,A,b)$-space is compact. The proof is provided in the Appendix. This result is needed to guarantee the existence of an optimal solution of the RCEP.

\begin{theorem}\label{thm:closedness_relativeCE}
    Under Assumption \ref{ass:H_columnwise} -- \ref{ass:delta_separable} the projection of the feasible region of RCEP on the $(c,A,b)$-space is compact.
\end{theorem}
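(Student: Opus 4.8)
The plan is to prove compactness by establishing something slightly stronger, namely that the \emph{entire} feasible region of RCEP in the joint $(x,c,A,b)$-space is compact; the desired claim then follows at once, because the projection onto the $(c,A,b)$-coordinates is a continuous (indeed linear) map and continuous images of compact sets are compact.

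First I would observe that any feasible tuple $(x,c,A,b)$ must satisfy $x\in\mathcal D(\hat x)$ by constraint \eqref{constr:bilinear_relative_CE_formulation_d} and $(c,A,b)\in\mathcal H$ by constraint \eqref{constr:bilinear_relative_CE_formulation_e}. Hence the whole feasible set is contained in the product $\mathcal D(\hat x)\times\mathcal H$. By Assumption~\ref{ass:D_convex} the set $\mathcal D(\hat x)$ is compact, and by Assumption~\ref{ass:H_columnwise} the set $\mathcal H$ is compact, so their product is compact. This immediately yields boundedness of the feasible region, and therefore of its projection.

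It then remains to show that the feasible region is closed, after which ``closed subset of a compact set'' gives compactness. The key point is that each remaining constraint carves out a closed set: constraints \eqref{constr:bilinear_relative_CE_formulation_c} and \eqref{constr:bilinear_relative_CE_formulation_f} are non-strict inequalities in continuous (linear) functions, while $\mathcal D(\hat x)$ and $\mathcal H$ are closed by the same two assumptions. The only constraint requiring a moment of care is the relative-objective constraint \eqref{constr:bilinear_relative_CE_formulation_b}, $c^\top x\le\alpha\hat c^\top\hat x$: its left-hand side is bilinear, hence nonconvex, but it remains jointly continuous in $(x,c)$, so the sublevel set it defines is still closed. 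Intersecting finitely many closed sets preserves closedness, and the feasible region is closed.

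The argument carries no serious obstacle precisely because Assumption~\ref{ass:D_convex} forces $x$ into a compact set, which is exactly what prevents $x$ from escaping to infinity and what distinguishes the relative case from the weak case, where Example~\ref{thm:openess_weakCE} showed the projection can be open. Equivalently, one may argue sequentially: given $(c^k,A^k,b^k)$ in the projection converging to a limit, lift each to a witness $x^k\in\mathcal D(\hat x)$, extract a convergent subsequence $x^{k_j}\to x^\star\in\mathcal D(\hat x)$ using compactness, and pass to the limit in \eqref{constr:bilinear_relative_CE_formulation_b}--\eqref{constr:bilinear_relative_CE_formulation_f}; joint convergence of $c^{k_j}$ and $x^{k_j}$ makes the bilinear term $(c^{k_j})^\top x^{k_j}$ converge to $(c^\star)^\top x^\star$, so the limit is again feasible. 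I would note that the columnwise structure in Assumption~\ref{ass:H_columnwise} and the separability and continuity of $\delta$ in Assumption~\ref{ass:delta_separable} are not actually needed for this statement; they are invoked only later for the convexification and for the existence of a minimizer.
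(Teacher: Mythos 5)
Your proof is correct and, at its core, identical to the paper's: the paper also argues sequentially, taking a convergent sequence of feasible parameters, lifting each to a witness $x^t\in\mathcal D(\hat x)$, extracting a convergent subsequence via compactness of $\mathcal D(\hat x)$, and passing to the limit in the (jointly continuous) bilinear constraints, with boundedness supplied by $\mathcal H$. Your packaging via ``closed subset of the compact set $\mathcal D(\hat x)\times\mathcal H$, then project continuously'' is a slightly cleaner wrapper around the same argument, and your side remark that the columnwise and separability assumptions are not actually used here is consistent with the paper's proof as well.
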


Due to Assumption \ref{ass:delta_separable} the objective function of the RCEP is continuous and hence by the Weierstrass theorem and Theorem \ref{thm:closedness_relativeCE} an optimal solution of the RCEP always exists, leading to the following corollary.

\begin{corollary}
    Under Assumption \ref{ass:H_columnwise} -- \ref{ass:delta_separable} either an optimal solution for the RCEP exists or it is infeasible.
\end{corollary}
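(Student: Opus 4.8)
The plan is to prove the Corollary directly as a short consequence of Theorem~\ref{thm:closedness_relativeCE} together with the continuity of the objective, invoking the Weierstrass extreme value theorem. Since the statement to be proved is exactly the existence-or-infeasibility dichotomy, I will treat Theorem~\ref{thm:closedness_relativeCE} and Assumptions~\ref{ass:H_columnwise}--\ref{ass:delta_separable} as given, and the entire content of the argument is the final Weierstrass step that the earlier lemma was set up to enable.

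First I would dispose of the trivial case: if the feasible region of the RCEP is empty, then the problem is infeasible and the second alternative of the dichotomy holds, so there is nothing further to prove. Hence for the remainder I may assume the feasible region is nonempty. Next I would observe that the objective value of the RCEP depends only on the $(c,A,b)$-variables through $\delta\left((c,A,b),(\hat c,\hat A,\hat b)\right)$, so minimizing over all feasible $(x,c,A,b)$ is the same as minimizing $\delta$ over the projection of the feasible region onto the $(c,A,b)$-space. Call this projection $\mathcal P$; it is nonempty by assumption.

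The two ingredients then combine cleanly. By Theorem~\ref{thm:closedness_relativeCE}, the set $\mathcal P$ is compact under Assumptions~\ref{ass:H_columnwise}--\ref{ass:delta_separable}. By Assumption~\ref{ass:delta_separable}, $\delta$ is continuous. A continuous real-valued function attains its minimum on a nonempty compact set, so $\min_{(c,A,b)\in\mathcal P}\delta\left((c,A,b),(\hat c,\hat A,\hat b)\right)$ is attained at some point $(c^*,A^*,b^*)\in\mathcal P$. Because $(c^*,A^*,b^*)$ lies in the projection, there exists $x^*$ such that $(x^*,c^*,A^*,b^*)$ is feasible for the RCEP, and this quadruple attains the optimal value. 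Thus an optimal solution exists, establishing the first alternative.

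I expect no serious obstacle here, since all the difficulty was already absorbed into Theorem~\ref{thm:closedness_relativeCE} (whose proof is deferred to the Appendix). The only point requiring a word of care is the reduction of the minimization to the projection: one must note that the objective is constant along the $x$-fibers of the feasible region, so attaining the infimum of $\delta$ over $\mathcal P$ is genuinely equivalent to attaining the infimum of the RCEP objective over the full feasible set, and that any minimizer of $\delta$ on $\mathcal P$ lifts back to a feasible quadruple by the definition of the projection. With that remark in place the corollary follows immediately.
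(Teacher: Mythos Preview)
Your proposal is correct and follows essentially the same approach as the paper: the paper states just before the corollary that continuity of $\delta$ (Assumption~\ref{ass:delta_separable}) together with the compactness from Theorem~\ref{thm:closedness_relativeCE} allows one to invoke the Weierstrass theorem. Your version simply spells out the projection-and-lift step more carefully, but the substance is identical.
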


We will now discuss how to transform the RCEP into an equivalent convex problem.One special setting which directly leads to a convex optimization problem is the following: Assume the only constraints in the set $D(\hat x)$ are of the form $x_i=\rho_i$, where $\rho_i$ is a fixed value, \textit{i.e.}, the favored solution space requires fixing a subset of variables to given values. If at the same time only the parameters in $c$ and $A$ corresponding to the same columns as the fixed variables are allowed to change, then no bilinear products appear in the formulation RCEP, and hence, the problem is a convex optimization problem if $\delta$ is convex.

To transform RCEP into a convex problem in the more general setting, we use the linear transformation from \cite{gorissen2024}. We will first show that by using the variable transformation
\begin{equation} \label{eq:definition_w_u}
\begin{aligned}
w_j & = c_j x_j\\
u_{ij} & = a_{ij} x_j, \quad \forall i\in[m],
\end{aligned}
\end{equation}
we can reformulate RCEP as a problem with convex feasible region. Then, we show that this variable transformation leads to a convex problem for several relevant classes of distance measures, $\delta$. %\todo{TODO for Dick: An example to motivate nonnegativity and column-wise change assumption.}

\begin{theorem}\label{thm:RCEP_convex}
The feasible region of RCEP after transformation \eqref{eq:definition_w_u} is convex and can be described as
\begin{subequations}
\begin{align}
    \mathcal P = \{ (x,w,U,b): \   \quad &\bm{1}^\top w \le \alpha \hat c^\top \hat x \label{constr:linearization_relative_CE_a}\\
    & U\bm{1} \ge b \label{constr:linearization_relative_CE_b}\\
    & x\in\mathcal D(\hat x), \label{constr:linearization_relative_CE_c}\\
    & \left(w_j, U_j\right) \in x_j\mathcal H_j, \quad \forall j\in [n], \label{constr:linearization_relative_CE_d}\\
    & b\in\mathcal H_{n+1}, \label{constr:linearization_relative_CE_e}\\
    & x\ge 0 \}, \label{constr:linearization_relative_CE_f}
\end{align}
\label{eq:linearization_relative_CE}
\end{subequations}
where $x_j\mathcal H_j:=\left\{ h': h'=x_jh, \ h\in \mathcal H_j\right\}$ for every $j\in[n]$.
\end{theorem}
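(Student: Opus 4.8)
The plan is to establish two separate facts asserted by the theorem: first, that the substitution \eqref{eq:definition_w_u} puts the RCEP feasible region in exact correspondence with the set $\mathcal P$, so the two descriptions have the same feasible solutions; and second, that $\mathcal P$ is convex. I would handle the correspondence constraint-by-constraint and then isolate the single genuinely nontrivial convexity claim, which is constraint \eqref{constr:linearization_relative_CE_d}.

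For the correspondence, I would substitute $w_j=c_jx_j$ and $u_{ij}=a_{ij}x_j$ directly into the RCEP constraints. Then $c^\top x=\sum_j c_jx_j=\sum_j w_j=\bm{1}^\top w$, so \eqref{constr:bilinear_relative_CE_formulation_b} becomes \eqref{constr:linearization_relative_CE_a}; row $i$ of $Ax$ equals $\sum_j a_{ij}x_j=\sum_j u_{ij}=(U\bm{1})_i$, so \eqref{constr:bilinear_relative_CE_formulation_c} becomes \eqref{constr:linearization_relative_CE_b}; while \eqref{constr:bilinear_relative_CE_formulation_d}, \eqref{constr:bilinear_relative_CE_formulation_f}, and $b\in\mathcal H_{n+1}$ carry over unchanged. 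The columnwise membership $(c_j,A_j)\in\mathcal H_j$ (Assumption \ref{ass:H_columnwise}) becomes $(w_j,U_j)=x_j(c_j,A_j)\in x_j\mathcal H_j$, giving \eqref{constr:linearization_relative_CE_d}. I would then argue both inclusions. Every feasible $(x,c,A,b)$ maps into $\mathcal P$ by the identities above. Conversely, given $(x,w,U,b)\in\mathcal P$, for each $j$ with $x_j>0$ I recover $(c_j,A_j)=(w_j,U_j)/x_j\in\mathcal H_j$, and for each $j$ with $x_j=0$ constraint \eqref{constr:linearization_relative_CE_d} forces $(w_j,U_j)=0$, so any $(c_j,A_j)\in\mathcal H_j$ (nonempty since $(\hat c,\hat A,\hat b)\in\mathcal H$) reproduces $w_j=0$ and $U_j=0$. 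This yields an admissible $(c,A,b)\in\mathcal H$ feasible for RCEP, so the feasible regions coincide.

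For convexity, constraints \eqref{constr:linearization_relative_CE_a} and \eqref{constr:linearization_relative_CE_b} are linear, \eqref{constr:linearization_relative_CE_c} and \eqref{constr:linearization_relative_CE_e} are convex by Assumptions \ref{ass:D_convex} and \ref{ass:H_columnwise}, and \eqref{constr:linearization_relative_CE_f} is the nonnegative orthant. The crux is \eqref{constr:linearization_relative_CE_d}. I would show that for fixed $j$ the set
\[
K_j=\left\{(x_j,w_j,U_j): x_j\ge 0,\ (w_j,U_j)\in x_j\mathcal H_j\right\}
\]
equals the cone generated by $\{1\}\times\mathcal H_j$, namely $\{\lambda(1,h):\lambda\ge 0,\ h\in\mathcal H_j\}$: for $x_j>0$ the condition reads $(w_j,U_j)/x_j\in\mathcal H_j$, and for $x_j=0$ it reduces to $(w_j,U_j)=0$. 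Since $\mathcal H_j$ is convex, this cone is convex by the standard perspective argument: for $\lambda_1(1,h_1),\lambda_2(1,h_2)\in K_j$ and $\theta\in[0,1]$, set $\mu=\theta\lambda_1+(1-\theta)\lambda_2$; if $\mu=0$ the convex combination is the origin, and if $\mu>0$ it equals $\mu(1,h)$ with $h=\frac{\theta\lambda_1}{\mu}h_1+\frac{(1-\theta)\lambda_2}{\mu}h_2\in\mathcal H_j$. Hence each $K_j$ is convex, \eqref{constr:linearization_relative_CE_d} is a convex restriction on the coordinates $(x_j,w_j,U_j)$, and $\mathcal P$ is an intersection of convex sets.

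The main obstacle I anticipate is exactly the convexity of \eqref{constr:linearization_relative_CE_d}: recognizing that scaling the convex set $\mathcal H_j$ by the nonnegative \emph{variable} $x_j$ traces out a convex cone is the hidden convex structure the transformation exploits, and the degenerate case $x_j=0$ must be treated carefully both in the correspondence, where one must still recover admissible $(c_j,A_j)$, and in the convexity argument, where the scaled set collapses to the origin.
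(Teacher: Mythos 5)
Your proposal is correct and follows essentially the same route as the paper: a constraint-by-constraint verification that transformation \eqref{eq:definition_w_u} puts the RCEP feasible set in bijection with $\mathcal P$ (with the $x_j=0$ case handled by noting that \eqref{constr:linearization_relative_CE_d} forces $(w_j,U_j)=0$ and that an arbitrary $(c_j,A_j)\in\mathcal H_j$ can be recovered), followed by convexity of each constraint. The one place where you go beyond the paper is the convexity of \eqref{constr:linearization_relative_CE_d}: the paper only remarks that $x_j\mathcal H_j$ is convex for each \emph{fixed} $x_j$ and defers the rest to the cited reference, whereas you explicitly show that $\left\{(x_j,w_j,U_j): x_j\ge 0,\ (w_j,U_j)\in x_j\mathcal H_j\right\}$ is the convex cone generated by $\{1\}\times\mathcal H_j$ via the perspective argument -- this is precisely the joint-convexity step that makes the claim airtight, so your write-up is, if anything, more complete than the paper's.
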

\begin{proof}
The proof follows the same idea as in \cite{gorissen2024}. Clearly, constraints \eqref{constr:linearization_relative_CE_a}--\eqref{constr:linearization_relative_CE_c} and \eqref{constr:linearization_relative_CE_f} are equivalent to \eqref{constr:bilinear_relative_CE_formulation_b}--\eqref{constr:bilinear_relative_CE_formulation_d} and \eqref{constr:bilinear_relative_CE_formulation_f} after applying the transformation \eqref{eq:definition_w_u}. Furthermore, if $x_j=0$, then constraint \eqref{constr:linearization_relative_CE_d} enforces $w_j=0$ and $U_j=0$. If $x_j>0$, then Constraint \eqref{constr:linearization_relative_CE_d} is equivalent to
\[
\left( \frac{w_j}{x_j} , \frac{U_j}{x_j}\right) \in \mathcal H_j,
\]
and hence, constraints \eqref{constr:linearization_relative_CE_d} and \eqref{constr:linearization_relative_CE_e}  are equivalent to \eqref{constr:bilinear_relative_CE_formulation_d} by Assumption \ref{ass:H_columnwise}. Note that $x_j\mathcal H_j$ is convex for every fixed $x_j> 0$. Since $\mathcal H_j$ is bounded, it holds $x_j\mathcal H_j= \{ 0\}$ if and only if $x_j=0$, which is again a convex set. Together with Assumption \ref{ass:D_convex} the set $\mathcal P$ is convex which proves the result. 
\end{proof}

From the latter result, we can directly conclude with the following observation.
\begin{corollary}
The projection of the feasible region of Problem \eqref{eq:bilinear_relative_CE_formulation} onto the $x$-variables is convex.
\end{corollary}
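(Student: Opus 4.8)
The plan is to observe that Theorem \ref{thm:RCEP_convex} already does almost all of the work: it exhibits a convex set $\mathcal P$ in the lifted $(x,w,U,b)$-space that is an exact reformulation of the RCEP feasible region under the transformation \eqref{eq:definition_w_u}. Since that transformation leaves the $x$-coordinate untouched, the key claim is that the projection onto $x$ of the original feasible region coincides with the projection onto $x$ of $\mathcal P$; the result then follows because a linear projection of a convex set is convex.

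First I would spell out the two projections. A point $x$ lies in the projection of the RCEP feasible region exactly when there exist $(c,A,b)$ making $(x,c,A,b)$ feasible for \eqref{eq:bilinear_relative_CE_formulation}, and $x$ lies in the projection of $\mathcal P$ exactly when there exist $(w,U,b)$ with $(x,w,U,b)\in\mathcal P$. To prove these two sets are equal, I would argue both inclusions. For the forward inclusion, given a feasible $(x,c,A,b)$, set $w_j=c_jx_j$ and $u_{ij}=a_{ij}x_j$ as in \eqref{eq:definition_w_u}; the proof of Theorem \ref{thm:RCEP_convex} shows the resulting $(x,w,U,b)$ lies in $\mathcal P$, so $x$ is in the projection of $\mathcal P$.

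The converse inclusion is the only step requiring care. Given $(x,w,U,b)\in\mathcal P$, I would recover original parameters as follows. For each index $j$ with $x_j>0$, constraint \eqref{constr:linearization_relative_CE_d} says $(w_j/x_j,\,U_j/x_j)\in\mathcal H_j$, so I set $c_j=w_j/x_j$ and $A_j=U_j/x_j$. For each $j$ with $x_j=0$, the same constraint forces $w_j=0$ and $U_j=0$, but these give no information about $(c_j,A_j)$; here I simply pick any element of the nonempty set $\mathcal H_j$, for instance $(\hat c_j,\hat A_j)$. In both cases $c_jx_j=w_j$ and $a_{ij}x_j=u_{ij}$, so $c^\top x=\bm{1}^\top w\le\alpha\hat c^\top\hat x$ and $(Ax)_i=(U\bm{1})_i\ge b_i$, while $x\in\mathcal D(\hat x)$, $(c,A,b)\in\mathcal H$ and $x\ge0$ hold directly. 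Thus $(x,c,A,b)$ is feasible for the RCEP and $x$ lies in the projection of the original feasible region.

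With the two projections shown to be equal, the conclusion is immediate: $\mathcal P$ is convex by Theorem \ref{thm:RCEP_convex}, the map $(x,w,U,b)\mapsto x$ is linear, and the image of a convex set under a linear map is convex. I expect the main obstacle to be the bookkeeping in the $x_j=0$ case of the converse inclusion, where the lifted variables underdetermine $(c_j,A_j)$ and one must invoke nonemptiness of $\mathcal H_j$ to reconstruct a genuinely feasible original point.
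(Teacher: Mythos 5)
Your argument is correct and is exactly the route the paper intends: the corollary is stated as a direct consequence of Theorem \ref{thm:RCEP_convex}, with the (implicit) reasoning that the transformed set $\mathcal P$ has the same $x$-projection as the original feasible region and that a linear image of a convex set is convex. Your careful reconstruction of $(c,A,b)$ in the converse inclusion, including the $x_j=0$ case handled via $(\hat c_j,\hat A_j)\in\mathcal H_j$, simply fills in details the paper leaves unstated.
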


The following example shows that if $x$ is not restricted in sign (i.e., $x\geq 0$ is not part of Problem (PP)), then the transformation is invalid, and the projection onto the $x$-space is not convex.

\begin{example}
Consider the following simple example, with $c=1$, $A=(\alpha, -\alpha)^T$, 
$b=(1,-1)^T$, 
$-1\leq x_1 \leq 1$,
$-1\leq \alpha \leq 1$. 
Then, the projection of the feasible region of the relative counterfactual problem ($ \mathcal P$) onto the $x$ space is given by
\[ 
\{ x \mid \exists \alpha:  \; \alpha x = 1, \; -1\leq x \leq 1, \; -1\leq \alpha\leq 1\} \;  = \{-1,1\}, 
\]
which is not convex.
\end{example}

The following example shows that if the columnwise assumption (Assumption \ref{ass:H_columnwise}) does not hold, then the transformation is invalid, and the projection onto the $x$-space is not convex.

\begin{example}
Consider the following simple example, with $c=0$, $A=
\left(\begin{array}{cc}
   \alpha  & -\alpha \\
   -\alpha  & \alpha
\end{array}\right)$, 
$b=(1,-1)^T$, 
$0\leq x_1, x_2 \leq 1$,
$-1\leq \alpha \leq 1$. 
Then, the projection of the feasible region of the relative counterfactual problem ($ \mathcal P$) onto the $x$ space is given by
\[ \{ (x_1,x_2) \mid \exists \alpha:  \; \alpha (x_1-x_2) = 1, \; 
0\leq x_1, x_2 \leq 1, \;
-1\leq \alpha \leq 1\} \;  =  \{(1,0), (0,1)\},
       \]
which is not convex.
\end{example}

% \begin{proof}
% Let $x^{(1)}$ and $x^{(2)}$ be feasible for \eqref{eq:bilinear_relative_CE_formulation}. Hence, by Theorem \ref{thm:RCEP_convex} there exist corresponding feasible solutions $(x^{(1)},w^{(1)}, U^{(1)}, b)$ and $(x^{(2)},w^{(2)}, U^{(2)}, b)$ in $\mathcal P$ defined in \eqref{eq:linearization_relative_CE}. Since $\mathcal P$ is convex, also the point  $(\lambda x^{(1)} + (1-\lambda ) x^{(2)},\lambda w^{(1)} + (1-\lambda ) w^{(2)}, \lambda U^{(1)} + (1-\lambda ) U^{(2)} , b)$ is contained in $\mathcal P$. Applying the same theorem again it follows that for the latter point there exists a corresponding feasible point $(\lambda x^{(1)} + (1-\lambda ) x^{(2)},c(\lambda), A(\lambda ) , b)$ for Problem \eqref{eq:bilinear_relative_CE_formulation} which proves the result.
% \end{proof}

Substituting the variable transformation \eqref{eq:definition_w_u} into the objective function of RCEP leads by Assumption \ref{ass:delta_separable} to the objective function
\begin{equation}
\sum_{j=1}^{n}  \delta_j\left( (\frac{w_j}{x_j},\frac{U_j}{x_j}),(\hat c_j,\hat A_j)\right) + \delta_{n+1}\left( b \right).
\end{equation}
There are several relevant cases in which RCEP remains convex after applying the variable transformation \eqref{eq:definition_w_u}:

\begin{enumerate}
  \item Suppose only the parameters in $b$ and one column $j \in [n]$ are allowed to change. Hence, in this case the objective function becomes 
  \[\delta_j((c_j,A_j), (\hat{c}_j,\hat{A}_j)) + \delta_{n+1}(b, \hat{b}),\]
  in which $\delta_j$ and $\delta_{n+1}$ are convex functions. Using an epigraph variable $z$, the objective is to minimize $z+ \delta_{n+1}(b, \hat{b})$, under the additional constraint 
  \[ \delta_j((c_j,A_j),(\hat{c}_j,\hat{A}_j))\leq z.\] 
  If we fix the value $z$, then we can add this constraint to the set of convex constraints that define $\mathcal{H}_j$. Hence, in this case, RCEP reduces to a convex optimization problem. We can solve RCEP by using binary search over the possible values of $z$. 
  \item Suppose the objection function is given by
  \[  \max_{j\in [n]} \delta_j((c_j,A_j),(\hat{c}_j,\hat{A}_j)) + \delta_{n+1}(b, \hat{b}). \]
  Again, we use an epigraph variable $z$ for this objective function. Hence, the objective changes into minimizing $z+\delta_{n+1}(b, \hat{b})$, and one extra constraint \[\delta_j((c_j,A_j),(\hat{c}_j,\hat{A}_j))\leq z\]
  is added to $\mathcal{H}_j$ for every $j\in [n]$ . As in the previous case, this yields a convex optimization problem for a fixed value of $z$, and problem RCEP can be solved by using binary search over the value of $z$. 
  \item Suppose the objective function is given by
  \begin{equation}\label{eq:xdelta_obj}  \sum_{j=1}^n x_j\delta_j((c_j,A_j),(\hat{c}_j,\hat{A}_j))  + \delta_{n+1}(b, \hat{b}). \end{equation}
  One motivation for this choice is that  changes in the parameters of a certain column are counted less in the penalty function when the corresponding $x_j$ is low. Another situation could be when only the parameters in one column are mutable, and in $D(\hat{x})$ the value of the corresponding $x_j$ is specified.  
  After substitution \eqref{eq:definition_w_u}, this objective function becomes
  \[  \sum_{j=1}^n x_j\delta_j\left(\left(\frac{w_j}{x_j},\frac{U_j}{x_j}\right),(\hat{c}_j,\hat{A}_j)\right) + \delta_{n+1}(b, \hat{b}), \]
  which is jointly convex in $(x,w,U,b)$. Moreover, if $\delta_j((c_j,A_j),(\hat{c}_j,\hat{A}_j))$, $j \in [n]$, are linearly representable (e.g. the $\ell_1$- or $\ell_\infty$-norm) then, RCEP is linearly representable.
  \item Suppose the objective function is given by
  \[  \sum_{j=1}^n \delta_j((c_jx_j,A_jx_j),(\hat{c}_j\hat{x}_j,\hat{A}_j\hat{x}_j)) + \delta_{n+1}(b, \hat{b}). \]
After substituting \eqref{eq:definition_w_u}, this objective function becomes
  \[  \sum_{j=1}^n \delta_j((w_j,U_j),(\hat{c}_j\hat{x}_j,\hat{A}_j\hat{x}_j)) + \delta_{n+1}(b, \hat{b}), \]
which is jointly convex in $(w,U,b)$.
\end{enumerate}

\subsection{Testing Feasibility.}
\label{sec:tests}
In the following, we provide efficient ways to test if a returned parameter setting $(c,A,b)$ is actually feasible for the three different types of problems:

\begin{itemize}
    \item To verify that a solution $(c,A,b)$ is a relative CE, one can solve
    \begin{align*}
        \min_x \ & c^\top x \\
        s.t. \quad & Ax\ge b, \\
        &  x\in \mathcal D(\hat x), \\
        & x\ge 0.
    \end{align*}
    Then, we can check if the optimal value is at most $\alpha \hat c^\top \hat x$. If this is the case, then the solution $(c,A,b)$ is a relative CE.
    \item To verify that a solution $(c,A,b)$ is a weak CE, the equality in Proposition \eqref{prop:weak_CEs} can be checked by solving both the standard linear problem and the same problem with the additional constraints in $\mathcal D(\hat x)$. If both problems have the same optimal objective function value, then $(c,A,b)$ is a weak CE.
    \item There is an efficient preliminary-test that can detect whether solution $(c,A,b)$ is not a strong CE. This can be done by simply solving  
    \begin{align*}
        \min_x \ & c^\top x \\
        s.t. \quad & Ax\ge b, \\
        & x\ge 0,
    \end{align*}
    and checking if the optimal solution fulfills all the requirements in $\mathcal D(\hat x)$. If this is not the case, then the solution cannot be a strong CE. To verify that the solution is a strong CE, we have to check whether the following problem is feasible
    \begin{align*}
        \min_x \ & c^\top x \\
        s.t. \quad & x \in \mathcal D(\hat x), \quad \forall x \in \argmin_{\substack{z: Az\ge b, z\ge 0}} \ c^\top z.
    \end{align*}
    The robust constraint can again be reformulated into a finite set of constraints by following the steps in the proof of Theorem \ref{thm:SCEP}.
\end{itemize}

\section{Numerical Experiments.}
In this section,  we present three experiments. First, we calculate all three types of CEs (relative, weak and strong) for a low dimensional example of the diet problem and present the corresponding solutions to give a better understanding of the concepts. Second, we run experiments for relative and weak CEs for the complete version of the diet problem, which was studied in \cite{maragno2021mixed}, and present the corresponding solutions. The results indicate that calculating optimal weak or strong CEs %to optimality 
is not always possible during the time limit of one hour. On the other hand, optimal relative CEs can be calculated in seconds. Based on the latter results, we propose to use the concept of relative CEs for large dimensional instances. Hence, in the third experiment we calculate relative CEs for all NETLIB instances \citep{netlib}. We compare the bilinear formulation \eqref{eq:bilinear_relative_CE_formulation} to the equivalent linear reformulation \eqref{eq:linearization_relative_CE} with the objective function \eqref{eq:xdelta_obj}. The experiments show that the latter can be solved significantly faster.
%but coming with a reduction in optimality regarding the objective function of \eqref{eq:bilinear_relative_CE_formulation}. 

All algorithms were implemented in Python 3 and executed on a cluster with CPU AMD Rome 7H12 (2x), 64 Cores/Socket, 2.6GHz and 16 x 16GiB
3200MHz DDR4 RAM. All optimization problems are solved by Gurobi 10.0.2. 

We found stark numerical instabilities in our experiments when solving the bilinear formulations for weak, and strong CEs. These instabilities are probably caused by the possibility that the feasible region of the counterfactual problems can be open as shown in Sections \ref{sec:results_weakCE} and \ref{sec:results_strongCE}. Changing the accuracy parameters of Gurobi, or switching off the presolve procedure, could lead to significantly different solutions and sometimes it could not be verified (by applying the tests in Section \ref{sec:tests}) that the solution is indeed a weak or strong CE.

\subsection{The Diet Problem.}
In this section, we illustrate the different types of CEs in a practical setting. To this end, we consider the following version of the diet problem, which was studied in \cite{maragno2021mixed}.

We consider a set of suppliers $\mathcal S$ and a set of different types of food $\mathcal F$, containing, \textit{e.g.}, wheat, sugar or beans. Each food type contains a set of nutrients $\mathcal N$. The purchasing price per $100$ g of food type $f$ at supplier $s$ is denoted as $p_{f}^s$. We denote the vector of the prices  for all food types of supplier $s$ as $p^s\in\R_+^{|\mathcal F|}$. For each food type $f$, the amount of nutrient $\nu\in \mathcal N$ per 100g is denoted as $w_f^\nu$. For each nutrient, the required amount in grams per day is given as $\text{req}_{\nu}$. The goal is to decide how much of each food to buy from which supplier, such that the whole food basket 
satisfies all nutrient requirements and has minimal costs.

The diet problem is then defined as
\begin{equation}\label{eq:dietproblem}
\begin{aligned}
    \min \ &\sum_{s\in \mathcal S} (p^s)^\top x^s \\
    s.t. \quad & \sum_{s\in \mathcal S} (w^\nu)^\top x^s \ge \text{req}_\nu, \forall \nu\in \mathcal N, \\
    & \sum_{s\in \mathcal S} x_{\text{sugar}}^s = 0.2, \\
    & \sum_{s\in \mathcal S} x_{\text{salt}}^s = 0.05, \\
    & x^s\ge 0 \ \forall s\in \mathcal S,
\end{aligned}
\end{equation}
where %the value of a variable 
$x^s_{\text{product}}$ denotes the amount of the product (in 100g) which is purchased from supplier $s$. To reduce numerical instability we set an upper bound of $100$ for each variable, \textit{i.e.}, $0\le x_j^s\le 100$ for all $j\in \mathcal F$ and $s\in\mathcal S$. We use the Syria dataset as used in \cite{maragno2021mixed}. In the following subsections, we choose the $\ell_1$-norm for the distance measure $\delta_j$.

\subsubsection{Reduced Problem.}\label{sec:reduced_diet_problem}
To explain the concepts of the different types of counterfactual explanations, we consider a reduced version of the diet problem with only two suppliers, three products, and three nutrients. We round most parameters to the closest integer value:
\begin{equation}\label{eq:toy_dietproblem}
\begin{aligned}
    \min \ & 800 x_{\text{beans}}^1 + 1003 x_{\text{rice}}^1 + 300 x_{\text{wheat}}^1 + 1434 x_{\text{beans}}^2 + &1336 x_{\text{rice}}^2 + 500 x_{\text{wheat}}^2 \\
    s.t. \quad &  \sum_{s=1}^{2} 335 x_{\text{beans}}^s + 360 x_{\text{rice}}^s+ 330 x_{\text{wheat}}^s
   \ge 2100,  &\text{(Energy(kcal))}\\
   &  \sum_{s=1}^{2} 20 x_{\text{beans}}^s + 7 x_{\text{rice}}^s+ 12 x_{\text{wheat}}^s
   \ge 52.5,  &\text{(Protein(g))}\\
   &  \sum_{s=1}^{2}  x_{\text{beans}}^s +  \frac{1}{2}x_{\text{rice}}^s+ 2 x_{\text{wheat}}^s
   \ge 35, &\text{(Fat(g))}\\
    & 0\le x^s\le 100, \quad \forall s=1,2.
\end{aligned}
\end{equation}
The optimal solution of the latter problem is to buy $1750$ gram of wheat from Supplier 1, leading to the optimal costs of $5250$ (denoted as $x_{\text{wheat}}^1=1750$). Since Supplier 2 does not sell anything when this solution is implemented by the decision maker, she could ask the following counterfactual question: ``What is the minimum change in my prices such that at least 100 grams of beans and 250 grams of rice would be purchased from me?''

Formally the favored solution space is given as $\mathcal D=\left\{ x: x_{\text{beans}}^2\ge 1, \ x_{\text{rice}}^2\ge 2.5\right\}$. We allow every price of Supplier 2 to be changed by up to $\pm 100\%$, which defines the mutable parameter space. The factor $\alpha$ for relative CEs is set to one. The optimal CEs for the latter question can be found in Table \ref{tbl:CEs_prices_ToyDiet}.

\begin{table}[h!]
    \centering
    \begin{tabular}{l|c|c|c}
        & relative CE & weak CE & strong CE \\
        \hline
        CE & $p_{\text{wheat}}: 500.0 \to 29.1$ & \makecell{$p_{\text{beans}}: 1434.0 \to 150.0$ \\ $p_{\text{rice}}: 1336.0 \to 75.0$} & \makecell{no feasible solution \\ found during TL} \\
        \hline
        Opt. sol. & $x_{\text{wheat}}^2=1750.0$g &  \makecell{$x_{\text{wheat}}^1=1750$g} & - \\
        \hline
        \makecell{Opt. sol. with $D(x)$} & \makecell{$x_{\text{beans}}^2 = 100.0$g \\ $x_{\text{rice}}^2 = 250.0$g \\ $x_{\text{wheat}}^2 = 1637.5$g}& \makecell{$x_{\text{beans}}^2=100.0$g \\ $x_{\text{rice}}^2=6800$g}&  - 
    \end{tabular}
    \caption{Optimal counterfactual explanations regarding the prices for Problem \eqref{eq:toy_dietproblem} (Row 1); returned optimal solution of Problem \eqref{eq:toy_dietproblem} after price changes have been applied (Row 2); returned optimal solution of Problem \eqref{eq:toy_dietproblem} after price changes have been applied and with additional constraints in $\mathcal D$ (Row 3).}
    \label{tbl:CEs_prices_ToyDiet}
\end{table}

For the relative CE only the price of wheat is reduced significantly which leads to an optimal solution where only wheat is bought from Supplier 2 instead of the desired amounts of beans and rice. This can happen since a relative CE only asks for the minimal change in the prices such that the optimal solution, when adding the constraints in $\mathcal D$ to the problem, has at most the costs of the original problem. However, if the decision maker does not add these constraints to the problem then her optimal solution can violate the constraints in $\mathcal D$ and even have a smaller objective value than the one determined when the constraints are added.

For the weak CEs, two price parameters have to be changed, namely the prices for beans and for rice. However, when the problem is optimized with the updated prices the optimal solution does not fulfill the requirements in $\mathcal D$. This can happen since multiple optimal solutions exist and not the desired one is chosen by the solution method. However, if we optimize the same problem with additional constraints in $\mathcal D$, then the optimal value does not change but the solution fulfills the constraints in $\mathcal D$. For the strong CEs, no feasible CE could be found during the time limit of $2$ hours.

If Supplier 2 is not satisfied with the price changes which she has to undertake to achieve her desired solution properties, she could consider changing the nutrient values of her products as well. A counterfactual question in this case would be: ``What is the minimum change in my prices and the nutrient values of my products which I have to undertake, such that at least 100 grams of beans and 250 grams of rice would be purchased from me?'' The optimal CEs for this counterfactual question can be found in Table \ref{tbl:CEs_Nutrients_ToyDiet} in the Appendix.

\begin{table}[h!]
    \centering
    \resizebox{\columnwidth}{!}{
    \begin{tabular}{c|c|c|c}
        & relative CE & weak CE & strong CE \\
        \hline
        CE & \makecell{$p_{\text{wheat}}: 500.0 \to 62.4$ \\ Fat(g) in Beans : $1.0 \to 2.0$ \\ Fat(g) in Rice : $0.5 \to 1.0$ \\ Fat(g) in Wheat : $2.0 \to 4.0$} &  \makecell{$p_{\text{beans}}: 1434.0 \to 1000.0$ \\ $p_{\text{rice}}: 1336.0 \to 8.8$ \\ Protein(g) in Beans: $20.0 \to 40.0$ \\ Protein(g) in Rice: $7.0 \to 0.35$ \\ Fat(g) in Rice: $0.5 \to 1.0$} & \makecell{$p_{\text{beans}}: 1434.0 \to 0.0$ \\ $p_{\text{rice}}: 1336.0 \to 327.3$ \\ Energy(kcal) in Beans : $335.0 \to 0.0$ \\ Fat(g) in Rice : $0.5 \to 0.0$} \\
        \hline
        Opt. sol. & $x_{\text{wheat}}^2=875$g & \makecell{$x_{\text{beans}}^2=100.0$g \\ $x_{\text{rice}}^2=3400.0$g} & \makecell{$x_{\text{beans}}^2=100.0$g \\ $x_{\text{rice}}^2=583.3$g}\\
        \hline
        \makecell{Opt. sol. \\ with $\mathcal D$} & \makecell{$x_{\text{beans}}^2=100$g \\ $x_{\text{rice}}^2=250.0$g \\ $x_{\text{rice}}^2=762.5$g}  & \makecell{$x_{\text{beans}}^2=100.0$g \\ $x_{\text{rice}}^2=3400.0$g}  & \makecell{$x_{\text{beans}}^2=100.0$g \\ $x_{\text{rice}}^2=583.3$g}
    \end{tabular}}
    \caption{Optimal counterfactual explanations regarding the prices and nutrient values for Problem \eqref{eq:toy_dietproblem} (Row 1); returned optimal solution of Problem \eqref{eq:toy_dietproblem} after price and nutrient changes have been applied (Row 2); returned optimal solution of Problem \eqref{eq:toy_dietproblem} after price and nutrient changes have been applied and with additional constraints in $\mathcal D$ (Row 3).}
    \label{tbl:CEs_Nutrients_ToyDiet}
\end{table}

The results show that for the relative CEs, only the price for wheat is reduced significantly while the amount of fat for all products is doubled. Note that we only allow a change of $100\%$ for each parameter, hence more than doubling the nutrient value is not possible. For the weak CEs both, the prices of beans and rice have to be reduced, while at the same time three nutrient values have to be changed. This time it can be seen that the optimal solution after applying these changes fulfills the requirements in $\mathcal D$, although we only solve the weak CEs. For the strong CEs less parameters are changed compared to the weak CEs. However, the $\ell_1$-norm of parameter changes is larger than the one for weak CEs which provably must be the case. It can be seen that here optimizing the updated problem without the constraints in $\mathcal D$ leads to the desired requirements in $\mathcal D$ which must be the case for strong CEs. Interestingly, finding a strong CE was possible during the time limit, while for the setup, where only objective parameters can be changed, this was not possible; see Table \ref{tbl:CEs_prices_ToyDiet}.

\subsubsection{Complete Problem.}
We now study the diet problem in its full dimension. An optimal solution of the present problem \eqref{eq:dietproblem} calculated by Gurobi is given in Table \ref{tbl:optimal_solution_diet_problem}.

\begin{table}[h!]
    \centering
    \begin{tabular}{c|c|c|c}
        Supplier ID & Prod. 1 (amount) & Prod. 2 (amount) & Prod. 3 (amount)  \\
        \hline
        $1$ & Wheat (263.3 g) & & \\
        $2$ & CSB (70 g) & & \\
        $3$ & & & \\
        $4$ & Salt (5g) &  Oil (21.86 g) & \\
        $5$ & Milk (69.5 g) & Maize meal (122.6 g) & Sugar (20g)
    \end{tabular}
    \caption{Optimal solution of the diet problem.}
    \label{tbl:optimal_solution_diet_problem}
\end{table}

In the following we calculate strong, weak and relative CEs for the latter problem, where first we study counterfactuals only in the price parameters (\textit{i.e.}, objective parameters) and afterwards we consider counterfactuals in both, price parameters and nutrient parameters (\textit{i.e.}, objective and constraint parameters).

We implemented formulation \eqref{eq:bilinear_relative_CE_formulation} to calculate the relative CEs, the formulation from Theorem \ref{thm:weakCE_reformulation} to calculate weak CEs, and the formulation from Theorem \ref{thm:SCEP} to calculate strong CEs. All formulations were implemented in Gurobi with a time limit of one hour. All calculations for the relative CEs were finished in milliseconds while the calculations for the weak CEs sometimes took up to one minute. For the strong CEs, the numerical instabilities were very stark often leading to solutions which could not be verified to be strong CEs. Furthermore, when increasing the accuracy parameters often no feasible strong CEs could be found during the time limit. Hence, we do not present results for the strong CEs in this section. For the weak CEs we set the Gurobi parameters \textit{FeasibilityTol} and \textit{OptimalityTol} to $10^{-9}$ and switched off the presolve procedure. All solutions in the following tables are the best known solutions found during the time limit.  

\begin{table}[h!]
    \centering
    \begin{tabular}{c|c|c|c|c}
        \textbf{Supplier} & \multicolumn{2}{c|}{\textbf{relative CEs}} & \multicolumn{2}{c}{\textbf{weak CEs}}  \\
        \hline
        &CE&t (in s)&CE&t (in s) \\
        \hline
        \hline
        $1$ & \makecell{$p_{\text{wheat}}: 300.0 \to 112.4$} & $0.2$ &\makecell{$p_{\text{beans}}: 800.0 \to 255.4$ \\ $p_{\text{wheat}}: 300.0 \to 227.6$} & $60.3$  \\
        \hline
        $2$ & \makecell{$p_{\text{beans}}: 1433.5 \to 1407.8$ \\ $p_{\text{csb}}: 799.0 \to 792.5$ \\ $p_{\text{wheat}}: \infty \to 0.0$} & $0.1$ & \makecell{$p_{\text{beans}}: 1433.5 \to 233.6$ \\ $p_{\text{csb}}: 799.0 \to 839.0$ \\$p_{\text{wheat}}: \infty \to 233.2$} & $10.9$  \\
        \hline
        $3$ & \makecell{$p_{\text{wheat}}: 300.0 \to 112.4$} & $0.1$ & \makecell{$p_{\text{beans}}: 800.0 \to 255.4$ \\ $p_{\text{wheat}}: 300.0 \to 227.6$} & $60.4$  \\
        \hline     
        $4$ & \makecell{$p_{\text{wheat}}: \infty \to 13.5$} & $0.1$ & \makecell{$p_{\text{beans}}: 1151.1 \to 223.1$ \\ $p_{\text{wheat}}: \infty \to 221.2$} & $4.8$  \\
        \hline
        $5$ & \makecell{$p_{\text{wheat}}: \infty \to 1.0$} & $0.1$ & \makecell{$p_{\text{beans}}: 1383.2 \to 295.5$ \\ $p_{\text{bulgur}}: 671.4 \to 676.3$ \\ $p_{\text{milk}}: 467.5 \to 504.2$ \\ $p_{\text{maize meal}}: 249.3 \to 289.3$ \\ $p_{\text{wheat}}: \infty \to 266.2$} & $17.1$ \\
        \hline
    \end{tabular}
    \caption{Best known counterfactual explanations regarding changes in prices after time limit of one hour.}
    \label{tbl:CEs_prices_Diet}
\end{table}

\paragraph*{Prices.}\label{sec:price_CE_dietproblem}
We assume now that each supplier $s$ has control over its own prices $p^s$ which can be changed by at most $\pm 100\%$. Each supplier asks the following counterfactual question: ``What is the minimum change in my prices which I have to undertake, such that at least 100 grams of beans and 250 grams of wheat would be purchased from me?''

The solutions for this question are shown in Table \ref{tbl:CEs_prices_Diet}. The results show that the required price changes for the relative CEs are smaller than the ones for the weak CEs. However, some suppliers do not provide wheat, hence the change of price for wheat is infinite. In the Syria data, the value for $\infty$ is set to $1,000,000$. While the relative CEs could be solved in milliseconds the solution times for weak CEs could take up to one minute. 

\paragraph*{Nutrient Values.}\label{sec:nutrient_CE_dietproblem}
We now assume that each supplier can additionally change the amount of nutrients of each of her products by at most $\pm 100\%$. Note that the nutrient values of the other suppliers are not changed. Then, the question is ``What is the minimum change in the prices and nutrient values of all my products which I have to undertake, such that at least 100 grams of beans and 250 grams of wheat would be purchased from me?''

The best known solutions after the time limit are presented in Table \ref{tbl:CEs_Nutrients_Diet}. The results show that the number of changed parameters and the magnitude of changes is much smaller for relative CEs. For weak CEs the number of nutrient parameters to change is large. This can also be due to the restriction that each parameter can be changed by at most $\pm 100\%$. Indeed, it can be seen that many nutrient parameters were set to the maximum feasible value. All computations for the relative CEs could be performed in milliseconds, while all computations for the weak CEs hit the time limit.

\subsection{NETLIB Instances.}
In this section, we perform experiments for relative CEs on the NETLIB instances \citep{netlib}. To this end we calculate CEs by formulation \eqref{eq:bilinear_relative_CE_formulation} (where $\delta_j$ is the $\ell_1$-norm) and by the convex reformulation \eqref{eq:linearization_relative_CE} with objective \eqref{eq:xdelta_obj}, where again $\delta_j$ is the $\ell_1$-norm. Using the resulting objective function, we finally obtain a linear optimization even if we have 
%allows us to linearize the problem for 
an arbitrary number of columns which may contain mutable parameters. This significantly speeds up the solution time.

We next describe the setup for the experiments. For every NETLIB instance, we calculate the optimal solution $\hat x$. We generate the favored solution space $\mathcal D(\hat x)$ as follows: for every instance, we select three random columns $j$. If there is no conflict with the variable bounds of the NETLIB instance the favored constraint $x_j\ge 1.05\hat x_j$ is added (otherwise $x_j\le 0.95\hat x_j$). If $\hat x_j=0$ the favored constraint $x_j\ge 0.05$ is added if there is no conflict with the original variable bounds (otherwise $x_j\le -0.05$). To generate the mutable parameter space $\mathcal H$ we do the following. We draw one, five, and $10$ random columns, where we only consider columns for which the original lower bound on the variable in the present problem is non-negative. In each of the selected columns, all parameters are defined as mutable when rounding to zero digits changes the value. For a small set of instances, all problem parameters are integer, hence the latter procedure does not provide any mutable parameters. For this set of instances, all parameters are considered as mutable whose absolute value is larger than 10 and is not a multiple of 10. If we draw five columns we combine these with the column from the iteration where we only draw one column. When we draw $10$ columns we combine these with the columns from the iterations where we draw one column and five columns. By this procedure we ensure that the optimal value for a larger number of columns is at least as good as for a smaller number, since all columns from the previous iterations are contained. Every mutable parameter is allowed to change by $\pm 100\%$.  

The optimality factor is set to $\alpha=1$, i.e., we are looking for the smallest change in the problem parameters such that a solution with the desired properties in $\mathcal D(\hat x)$ exists, and which has the objective function value at least as good as the present problem. For every NETLIB instance and for every number of columns in $\{1,5,10\}$, we generate $20$ CE instances as described above. We divide the NETLIB instances into nine categories where we classify the dimension $n$ of the problem (\textit{i.e.}, the number of variables) as small if $n$ is at most the $35\%$ quantile, as medium if $n$ is between the $35\%$ and the $70\%$ quantile, and as large if $n$ is above the $70\%$ quantile of the list of dimensions of all NETLIB instances. We classify the number of constraints $m$ of each problem by the same procedure. The detailed information for the categorization is shown in Table \ref{tbl:NETLIB_categorization}.

\begin{table}[h!]
\centering
\resizebox{0.5\columnwidth}{!}{
\begin{tabular}{ccc}
Type & Category & Intervals \\
\hline
\# Variables & small & $0\le n\le 534$ \\
\# Variables & medium & $534\le n\le 2167$ \\
\# Variables & large & $2167\le n\le 22275$ \\
\hline
\# Constraints & small & $0\le m\le 351$ \\
\# Constraints & medium & $351\le m\le 906$ \\
\# Constraints & large & $906\le m\le 16675$ \\
\end{tabular}
}
\caption{Categorization of NETLIB instances.}\label{tbl:NETLIB_categorization}
\end{table}

Table \ref{tbl:NETLIB_instanceInfo} contains information on the instance setup. Note that the two columns denoted by ``\# mutable objective param.'' and ``\# mutable constraint param.''  correspond to the average number of bilinear terms which appear in formulation \eqref{eq:bilinear_relative_CE_formulation}. This value can be different for each NETLIB instance due to the procedure how we select mutable parameters as described above.

\begin{table}[h!]
\centering
\resizebox{0.7\columnwidth}{!}{
\begin{tabular}{l|l|l|rrrr}
$n$ & $m$ & \makecell{\# mut.\\columns} & \# inst. & \makecell{feasible \\ (in \%)} & \makecell{\# mutable \\ objective \\ param.} & \makecell{\# mutable \\ constraint \\ param.} \\
\hline
\multirow[c]{6}{*}{small} & \multirow[c]{3}{*}{small} & 1 & 28 & 38.00 & 0.38 & 4.85 \\
 &  & 5 & 28 & 54.00 & 1.67 & 20.93 \\
 &  & 10 & 28 & 59.00 & 4.09 & 54.67 \\
 & \multirow[c]{3}{*}{medium} & 1 & 7 & 36.00 & 0.86 & 6.09 \\
 &  & 5 & 7 & 61.00 & 4.01 & 32.48 \\
 &  & 10 & 7 & 64.00 & 10.78 & 81.00 \\
 \hline
\multirow[c]{9}{*}{medium} & \multirow[c]{3}{*}{small} & 1 & 4 & 84.00 & 0.75 & 5.88 \\
 &  & 5 & 4 & 100.00 & 2.59 & 22.58 \\
 &  & 10 & 4 & 100.00 & 6.75 & 54.12 \\
 & \multirow[c]{3}{*}{medium} & 1 & 22 & 35.00 & 0.47 & 20.00 \\
 &  & 5 & 22 & 51.00 & 2.41 & 26.66 \\
 &  & 10 & 22 & 58.00 & 6.35 & 42.81 \\
 & \multirow[c]{3}{*}{large} & 1 & 8 & 31.00 & 0.56 & 3.51 \\
 &  & 5 & 8 & 55.00 & 2.07 & 11.92 \\
 &  & 10 & 8 & 63.00 & 5.66 & 27.24 \\
 \hline
\multirow[c]{9}{*}{large} & \multirow[c]{3}{*}{small} & 1 & 2 & 25.00 & 1.00 & 0.00 \\
 &  & 5 & 2 & 48.00 & 1.89 & 0.42 \\
 &  & 10 & 2 & 50.00 & 4.05 & 1.15 \\
 & \multirow[c]{3}{*}{medium} & 1 & 6 & 43.00 & 0.63 & 2.49 \\
 &  & 5 & 6 & 49.00 & 3.34 & 10.46 \\
 &  & 10 & 6 & 53.00 & 8.62 & 26.16 \\
 & \multirow[c]{3}{*}{large} & 1 & 21 & 45.00 & 0.60 & 5.49 \\
 &  & 5 & 21 & 58.00 & 2.50 & 22.75 \\
 &  & 10 & 21 & 65.00 & 6.40 & 54.14 \\
\end{tabular}
}
\caption{Instance informations for the considered classes of NETLIB instances. We show from left to right the following values: the size of the dimension; the size of the number of constraints; the number of random columns for mutable parameter selection; the number of NETLIB instances which fall into the category; the number of instances for which a feasible relative CE exists in \%; the average number of objective parameters which are selected to be mutable; the average number of constraint parameters which are selected to be mutable. }\label{tbl:NETLIB_instanceInfo}
\end{table}

Table \ref{tbl:NETLIB_solutionInfo} contains information about the solution quality of both problems. In the solutions of the linear problem \eqref{eq:linearization_relative_CE}, the number of changed parameters (both objective and constraint parameters) is smaller than for problem formulation \eqref{eq:bilinear_relative_CE_formulation}, \textit{i.e.}, the linear problem leads to sparser changes in the problem parameters. This is probably due to the convexity of the linear  problem, since minimizing the $\ell_1$-norm over convex sets leads to sparse solutions.

Table \ref{tbl:NETLIB_solutionTimeInfo} shows results on the runtime of the methods. The results show that for all instance sizes the linear problems can be solved faster than the bilinear formulation. The improvement in solution time increases for larger instance sizes. The same holds for detecting infeasibility. Furthermore, the bilinear formulation could not be solved to optimality in all cases within the time limit, while the linear problems never hit the time limit. We also show the time to solve the present problem (PP) and the time to set up the problem formulation for the present problem. The latter metric is motivated by our observation that setting up the constraint matrix (although using the sparse-matrix datatype provided by Numpy) takes a significant amount of the runtime for both, the present problem and the counterfactual problems. The results show that the runtime of the present problem is of the same order or sometimes even larger order compared to the runtime of solving the linear relative CE problem. The larger runtime comes from the fact that not all of the relative CE problems are feasible.

\section{Outlook.}
In this work, we argue that counterfactual explanations constitute a useful tool to provide explainability for linear optimization problems. We present three different types of counterfactual explanations which cover many relevant situations in practical applications. In contrast to weak CEs, the concept of strong CEs considers the case that more than one optimal solution exists and enforces that all of them fulfill the desired conditions. On the other hand relative CEs provide changes in the problem paramaters for which a desired solution would not lead to a large increase in objective value. Our theoretical analysis shows that the relative CE problem can be reformulated as a convex problem for many special cases. Using this hidden convexity leads to computationally tractable solution methods as our experiments confirm.

Since the development of counterfactual explanations in optimization is still at the very beginning, there are many open research questions which should be studied in future works. The concepts we developed in this work could be extended to non-linear optimization problems, or to problems involving integer decisions. While for the latter class of problems several works already exist, these do not cover the case of mutable constraint parameters.

While we developed the concept of strong CEs to tackle the issue of possible multiple optimal solutions, our experiments show that the changes in problem parameters which have to be performed for a strong CE are significant or in many situations strong CEs even do not exist. This is undesirable in a practical setting. The reason for developing this concept stems from the fact that we do not consider which solution algorithm is used by the decision maker to derive an optimal solution. A future direction could be to calculate CEs for specific types of optimization algorithms, \textit{e.g.}, the simplex method or the branch-and-bound method. While this is a challenging task, this would provide a less conservative way to calculate CEs in the strong fashion.

\begin{table}[h!]
\centering
\resizebox{0.7\columnwidth}{!}{
\begin{tabular}{l|l|l|rr|rr}
\multicolumn{3}{c|}{} & \multicolumn{2}{c|}{\makecell{\eqref{eq:bilinear_relative_CE_formulation} with obj. \\$\sum_{j=1}^{n} \delta_j((c,A),(\hat c,\hat A))$}} & \multicolumn{2}{c}{\makecell{\eqref{eq:linearization_relative_CE} with obj. \\$\sum_{j=1}^{n} x_j\delta_j((c,A),(\hat c,\hat A))$}} \\
\hline
$n$ & $m$ & \makecell{\# mut.\\col.} & $\|c-\hat c\|_0$ & $\| A-\hat A\|_0$  & $\|c-\hat c\|_0$ & $\| A-\hat A\|_0$ \\
\hline
\multirow[c]{6}{*}{small} & \multirow[c]{3}{*}{small} & 1 & 0.25 & 2.24  & 0.18 & 1.14  \\
 &  & 5 & 0.46 & 4.39& 0.20 & 1.20  \\
 &  & 10 & 0.42 & 6.08  & 0.17 & 1.34  \\
 & \multirow[c]{3}{*}{medium} & 1 & 0.63 & 2.30  & 0.55 & 1.04 \\
 &  & 5 & 0.84 & 4.56  & 0.45 & 2.03  \\
 &  & 10 & 0.76 & 5.31  & 0.34 & 2.88  \\
 \hline
\multirow[c]{9}{*}{medium} & \multirow[c]{3}{*}{small} & 1 & 0.49 & 2.13  & 0.38 & 1.66  \\
 &  & 5 & 0.50 & 4.08 & 0.09 & 2.17  \\
 &  & 10 & 1.06 & 10.92 & 0.00 & 1.99  \\
 & \multirow[c]{3}{*}{medium} & 1 & 0.33 & 2.59  & 0.31 & 0.92  \\
 &  & 5 & 0.42 & 3.23  & 0.31 & 0.98  \\
 &  & 10 & 0.58 & 4.31  & 0.32 & 0.96  \\
 & \multirow[c]{3}{*}{large} & 1 & 0.49 & 1.37  & 0.47 & 0.75 \\
 &  & 5 & 0.63 & 2.02  & 0.48 & 1.02  \\
 &  & 10 & 0.80 & 3.60  & 0.53 & 1.16 \\
 \hline
\multirow[c]{9}{*}{large} & \multirow[c]{3}{*}{small} & 1 & 1.00 & 0.00  & 1.00 & 0.00  \\
 &  & 5 & 1.26 & 0.00  & 1.26 & 0.00  \\
 &  & 10 & 1.45 & 0.05  & 1.20 & 0.05  \\
 & \multirow[c]{3}{*}{medium} & 1 & 0.44 & 1.05 & 0.42 & 0.93  \\
 &  & 5 & 0.55 & 2.09 & 0.48 & 0.97  \\
 &  & 10 & 0.75 & 6.15 & 0.71 & 1.25  \\
 & \multirow[c]{3}{*}{large} & 1 & 0.42 & 1.73 & 0.41 & 1.64 \\
 &  & 5 & 0.56 & 1.57 & 0.47 & 1.39  \\
 &  & 10 & 0.89 & 1.78 & 0.69 & 1.64 \\
\end{tabular}
}
\caption{Solution quality information for the considered classes of NETLIB instances. From left to right we show the following information: the size of the dimension; the size of the number of constraints; the number of random columns for mutable parameter selection; the number of objective parameters which are changed compared to the present problem; the number of constraint parameters which are changed compared to the present problem.}\label{tbl:NETLIB_solutionInfo}
\end{table}

\begin{table}[h!]
\centering
\resizebox{0.9\columnwidth}{!}{
\begin{tabular}{lll|rrr|rrr|rr}
\multicolumn{3}{c|}{} & \multicolumn{3}{c|}{\eqref{eq:bilinear_relative_CE_formulation} with obj. $\sum_{j=1}^{n} \delta_j((c,A),(\hat c,\hat A))$} & \multicolumn{3}{c}{\eqref{eq:linearization_relative_CE} with obj. $\sum_{j=1}^{n} x_j\delta_j((c,A),(\hat c,\hat A))$} & \multicolumn{2}{c}{PP}\\
\hline
$n$ & $m$ & \makecell{\# mut.\\col.} & \makecell{Hit TL\\ (in \%)} & \makecell{$t$ \\ (in sec.)} & \makecell{$t$ infeas.\\ (in sec.)} & \makecell{Hit TL\\ (in \%)} & \makecell{$t$ \\ (in sec.)} & \makecell{$t$ infeas.\\ (in sec.)} & \makecell{$t$ \\ (in sec.)} & \makecell{$t$ setup \\ (in sec.)}\\
\hline
\multirow[c]{6}{*}{small} & \multirow[c]{3}{*}{small} & 1 & 0.00 & 0.19 & 0.13 & 0.00 & 0.14 & 0.13 & \multirow[c]{3}{*}{$0.13$} & \multirow[c]{3}{*}{$0.12$} \\
 &  & 5 & 1.00 & 29.19 & 0.13 & 0.00 & 0.15 & 0.13 & & \\
 &  & 10 & 2.00 & 43.97 & 0.14 & 0.00 & 0.17 & 0.15 & & \\
 & \multirow[c]{3}{*}{medium} & 1 & 0.00 & 0.42 & 0.31 & 0.00 & 0.31 & 0.30 & \multirow[c]{3}{*}{$0.32$} & \multirow[c]{3}{*}{$0.30$} \\
 &  & 5 & 1.00 & 16.09 & 0.31 & 0.00 & 0.33 & 0.31 & &\\
 &  & 10 & 3.00 & 53.40 & 0.32 & 0.00 & 0.36 & 0.33 & &\\
 \hline
\multirow[c]{9}{*}{medium} & \multirow[c]{3}{*}{small} & 1 & 0.00 & 0.79 & 0.78 & 0.00 & 0.53 & 0.77 & \multirow[c]{3}{*}{$0.53$} & \multirow[c]{3}{*}{$0.49$} \\
 &  & 5 & 0.00 & 1.49 & nan & 0.00 & 0.54 & nan & &\\
 &  & 10 & 4.00 & 109.16 & nan & 0.00 & 0.58 & nan & &\\
 & \multirow[c]{3}{*}{medium} & 1 & 0.00 & 0.99 & 0.45 & 0.00 & 0.50 & 0.44 & \multirow[c]{3}{*}{$0.50$} & \multirow[c]{3}{*}{$0.45$}\\
 &  & 5 & 0.00 & 11.28 & 0.44 & 0.00 & 0.49 & 0.44 & &\\
 &  & 10 & 1.00 & 32.76 & 0.45 & 0.00 & 0.51 & 0.45 & &\\
 & \multirow[c]{3}{*}{large} & 1 & 0.00 & 1.64 & 1.03 & 0.00 & 1.04 & 1.03 & \multirow[c]{3}{*}{$1.14$} & \multirow[c]{3}{*}{$1.06$}\\
 &  & 5 & 1.00 & 21.10 & 1.00 & 0.00 & 1.05 & 1.01 & &\\
 &  & 10 & 2.00 & 51.35 & 1.01 & 0.00 & 1.08 & 1.04 & &\\
 \hline
\multirow[c]{9}{*}{large} & \multirow[c]{3}{*}{small} & 1 & 0.00 & 11.91 & 8.40 & 0.00 & 11.10 & 8.27 & \multirow[c]{3}{*}{$8.22$} & \multirow[c]{3}{*}{$7.98$}\\
 &  & 5 & 0.00 & 11.43 & 8.48 & 0.00 & 10.50 & 8.28 & &\\
 &  & 10 & 0.00 & 12.17 & 7.27 & 0.00 & 11.08 & 7.19 & &\\
 & \multirow[c]{3}{*}{medium} & 1 & 0.00 & 2.17 & 1.40 & 0.00 & 0.90 & 1.36 & \multirow[c]{3}{*}{$1.34$} & \multirow[c]{3}{*}{$1.24$}\\
 &  & 5 & 0.00 & 3.48 & 1.39 & 0.00 & 0.92 & 1.35 & &\\
 &  & 10 & 0.00 & 15.65 & 1.42 & 0.00 & 0.95 & 1.38 & &\\
 & \multirow[c]{3}{*}{large} & 1 & 1.00 & 94.58 & 28.78 & 0.00 & 4.90 & 5.36 & \multirow[c]{3}{*}{$17.35$} & \multirow[c]{3}{*}{$3.18$} \\
 &  & 5 & 3.00 & 117.39 & 24.37 & 0.00 & 4.42 & 5.40 & &\\
 &  & 10 & 6.00 & 202.81 & 27.19 & 0.00 & 4.95 & 4.90 & &\\
\end{tabular}
}
\caption{Solution time information for the considered classes of NETLIB instances. We show from left to right the following values: the size of the dimension; the size of the number of constraints; the number of random columns for mutable parameter selection; the amount of instances for which the solution method hit the time limit of $1800$ seconds; the solution time $t$ in seconds (averaged over all instances which are not infeasible); the time $t$ which was needed to detect infeasibility (averaged over all instances which are infeasible); the time $t$ in seconds to solve the present problem (PP); the time $t$ to set up the problem formulation for the present problem.}\label{tbl:NETLIB_solutionTimeInfo}
\end{table}

\clearpage

\bibliographystyle{apalike}
\bibliography{references}

\section*{Appendix}

\subsection*{Proof of Theorem \ref{thm:closedness_relativeCE}}
 \ \\
 \begin{proof}
To prove the result we show that every convergent sequence of feasible points in the $(c,A,b)$-space has a limit which is also feasible. W.l.o.g. we may assume that the objective parameters $c$ are not mutable, since we can always shift them into the constraints by using an epigraph reformulation. Furthermore, w.l.o.g we may assume that the righ-hand-side parameters $b$ are not mutable since we can introduce a new variable $x_{n+1}$ and rewrite the constraint system as
% \begin{align*}
% Ax-bx_{n+1} &\ge 0 \\
% x_{n+1} & \ge 1 \\
% -x_{n+1} & \ge -1 \\
% x_{n+1} & \ge 0.
% \end{align*}

Assume we have an infinite converging sequence $\{A^t\}_{t\in \N}$ of constraint matrices for which RCEP is feasible. We denote the limit of the sequence as $\bar A$. This limit lies in $\mathcal H$ since it is closed. Then for every $t$ there exists a point $x^t \in \mathcal D(\hat x)$ for which $A^tx^t\ge b$. Since $\mathcal D(\hat x)$ is compact, there exists a converging subsequence $\{x^t\}_{t\in I}$ with limit $\tilde x\in \mathcal D(\hat x)$. It holds
\[
\bar A \tilde x = \lim_{t\to \infty, t\in I} A^t \lim_{t\to \infty, t\in I} x^t = \lim_{t\to \infty, t\in I} A^t x^t \ge b
\]
where the latter inequality holds since $A^tx^t\ge b$ for all $t$ and the set $\{v: v\ge b\}$ is closed. By the same argumentation $\tilde x$ fulfills all other constraints of RCEP and hence $\bar A$ is feasible for RCEP which proves the closedness of the feasible region. Since $\mathcal H$ is bounded by assumption this proves compactness.
\end{proof}

\subsection*{Experimental Results Diet Problem}
Table \ref{tbl:CEs_Nutrients_Diet} shows the optimal relative and weak CEs for the reduced version of the diet problem studied in \ref{sec:reduced_diet_problem}.
\begin{table}[h!]
    \centering
    \resizebox{0.9\columnwidth}{!}{
    \begin{tabular}{c|c|c|c|c}
        \textbf{Supplier} & \multicolumn{2}{c|}{\textbf{relative CEs}} & \multicolumn{2}{c}{\textbf{weak CEs}}   \\
        \hline
        &CE&t (in s)&CE&t (in s) \\
        \hline
        \hline
        $1$ & \makecell{$p_{\text{wheat}}: 300.0 \to 126.2$ \\
        Fat(g) in Beans: $1.2 \to 2.4$ \\
Fat(g) in Wheat: $1.5 \to 3.0$ \\
} & $0.6$ &\makecell{$p_{\text{wheat}}: 300.0 \to 151.8$ \\
$p_{\text{beans}}: 800.0 \to 647.6$ \\
Energy(kcal) in Wheat : $330.0 \to 411.7$\\
Fat(g) in Beans : $1.2 \to 2.4$\\
Fat(g) in Wheat : $1.5 \to 3.0$\\
Iron(mg) in Beans : $8.2 \to 8.8$\\
Iron(mg) in Wheat : $4.0 \to 0.0$\\
ThiamineB1(mg) in Wheat : $0.3 \to 0.0$\\
RiboflavinB2(mg) in Beans : $0.22 \to 0.4$\\
RiboflavinB2(mg) in Wheat : $0.07 \to 0.0$\\
RiboflavinB2(mg) in WSB : $0.6 \to 0.7$\\
Folate(ug) in Beans : $180.0 \to 139.5$\\
Folate(ug) in Wheat : $51.0 \to 0.0$
} & $3600.0$ \\
        \hline
        $2$ & \makecell{$p_{\text{wheat}}: \infty \to 0.0$ \\ VitaminA(ug) in CSB: $500.0 \to 521.0$} & $0.4$ & \makecell{
        $p_{\text{beans}}: 1433.5 \to 828.3$ \\
        $p_{\text{csb}}: 799.0 \to 774.5$ \\
        $p_{\text{wheat}}: \infty \to 22.5$ \\
Fat(g) in Beans : $1.2 \to 2.4$\\
Fat(g) in CSB : $6.0 \to 12.0$\\
Fat(g) in Wheat : $1.5 \to 2.2$\\
Calcium(mg) in Wheat : $36.0 \to 47.4$\\
Iron(mg) in Beans : $8.2 \to 9.0$\\
Iron(mg) in CSB : $18.5 \to 13.0$\\
Iron(mg) in Wheat : $4.0 \to 0.0$\\
RiboflavinB2(mg) in Beans : $0.22 \to 0.4$\\
RiboflavinB2(mg) in CSB : $0.5 \to 1.0$\\
RiboflavinB2(mg) in Wheat : $0.07 \to 0.0$\\
NicacinB3(mg) in Beans : $2.1 \to 2.06$\\
NicacinB3(mg) in CSB : $6.8  \to 7.0$\\
NicacinB3(mg) in Wheat : $5.0 \to 0.3$\\
Folate(ug) in Beans : $180.0 \to 159.8$\\
Folate(ug) in Wheat : $51.0 \to 0.0$
} & $3600.0$\\
        \hline
        $3$ & \makecell{$p_{\text{wheat}}: 300.0 \to 126.2$ \\
        Fat(g) in Beans: $1.2 \to 2.4$\\
Fat(g) in Wheat: $1.5 \to 3.0$} & $0.8$ & \makecell{
$p_{\text{beans}}: 800.0 \to 694.9$ \\
$p_{\text{wheat}}: 300.0 \to 101.3$ \\
$p_{\text{wheat flour}}: 500.4 \to 499.2$ \\
Energy(kcal) in Wheat : $330.0 \to 332.7$\\
Fat(g) in Beans : $1.2 \to 2.4$\\
Fat(g) in Wheat : $1.5 \to 3.0$\\
Calcium(mg) in Wheat : $36.0 \to 34.2$\\
Iron(mg) in Wheat : $4.0 \to 0.2$\\
ThiamineB1(mg) in Wheat : $0.3 \to 0.1$\\
RiboflavinB2(mg) in Beans : $0.22 \to 0.4$\\
RiboflavinB2(mg) in Wheat : $0.07 \to 0.0$\\
Folate(ug) in Beans : $180.0 \to 140.2$\\
Folate(ug) in Wheat : $51.0 \to 0.0$
} & $3600.0$ \\
        \hline
        $4$ & \makecell{$p_{\text{wheat}}: \infty \to 26.3$ \\ Fat(g) in Wheat: $1.5\to 3.0$} & $0.5$ & \makecell{
        $p_{\text{wheat}}: \infty \to 87.6$ \\
$p_{\text{beans}}: 1151.1 \to 734.4$ \\
Energy(kcal) in Wheat : $330.0 \to 338.9$\\
Fat(g) in Beans : $1.2 \to 2.4$\\
Fat(g) in Oil : $100.0 \to 74.7$\\
Fat(g) in Wheat : $1.5 \to 3.0$\\
Iron(mg) in Beans : $8.2 \to 8.8$\\
Iron(mg) in Wheat : $4.0 \to 0.0$\\
RiboflavinB2(mg) in Beans : $0.22 \to 0.4$\\
RiboflavinB2(mg) in Wheat : $0.07 \to 0.0$\\
Folate(ug) in Beans : $180.0 \to 140.2$\\
value Folate(ug) in Wheat : $51.0 \to 0.0$
} & $3600.0$ \\
        \hline
        $5$ & $p_{\text{wheat}}: \infty \to 0.0$ & $0.4$ & \makecell{
        $p_{\text{wheat}}: \infty \to 85.1$\\
        $p_{\text{beans}}: 1383.2 \to 743.7$ \\
Energy(kcal) in Wheat : $330.0 \to 311.3$\\
Fat(g) in Beans : $1.2 \to 2.4$\\
Fat(g) in CSB : $6.0 \to 12.0$\\
Fat(g) in Milk : $1.0 \to 2.0$\\
Fat(g) in Wheat : $1.5 \to 3.0$\\
Calcium(mg) in Wheat : $36.0 \to 31.6$\\
Iron(mg) in Beans : $8.2 \to 8.8$\\
Iron(mg) in Wheat : $4.0 \to 0.0$\\
VitaminA(ug) in Milk : $280.0 \to 281.4$\\
RiboflavinB2(mg) in Beans : $0.22 \to 0.4$\\
RiboflavinB2(mg) in CSB : $0.5 \to 0.7$\\
RiboflavinB2(mg) in Milk : $1.2 \to 0.9$\\
RiboflavinB2(mg) in Wheat : $0.07 \to 0.0$\\
Folate(ug) in Beans : $180.0 \to 140.2$\\
Folate(ug) in Wheat : $51.0 \to 0.0$
} & $3600.0$ \\
        \hline
    \end{tabular}}
    \caption{Best known counterfactual explanations regarding changes in prices and nutrients after a time limit of one hour.}
    \label{tbl:CEs_Nutrients_Diet}
\end{table}

\end{document}